\begin{document}

% define theorem environments
\newtheorem{theorem}{Theorem}    %[section]
\newtheorem{proposition}[theorem]{Proposition}
\newtheorem{conjecture}[theorem]{Conjecture}
\def\theconjecture{\unskip}
\newtheorem{corollary}[theorem]{Corollary}
\newtheorem{lemma}[theorem]{Lemma}
\newtheorem{sublemma}[theorem]{Sublemma}
\newtheorem{observation}[theorem]{Observation}
\newtheorem{remark}[theorem]{Remark}
\newtheorem{definition}[theorem]{Definition}
\theoremstyle{definition}
\newtheorem{notation}[theorem]{Notation}
\newtheorem{question}[theorem]{Question}
\newtheorem{questions}[theorem]{Questions}
\newtheorem{example}[theorem]{Example}
\newtheorem{problem}[theorem]{Problem}
\newtheorem{exercise}[theorem]{Exercise}

\numberwithin{theorem}{section} \numberwithin{theorem}{section}
\numberwithin{equation}{section}

\def\earrow{{\mathbf e}}
\def\rarrow{{\mathbf r}}
\def\uarrow{{\mathbf u}}
\def\varrow{{\mathbf V}}
\def\tpar{T_{\rm par}}
\def\apar{A_{\rm par}}

\def\reals{{\mathbb R}}
\def\torus{{\mathbb T}}
\def\heis{{\mathbb H}}
\def\integers{{\mathbb Z}}
\def\naturals{{\mathbb N}}
\def\complex{{\mathbb C}\/}
\def\distance{\operatorname{distance}\,}
\def\support{\operatorname{support}\,}
\def\dist{\operatorname{dist}\,}
\def\Span{\operatorname{span}\,}
\def\degree{\operatorname{degree}\,}
\def\kernel{\operatorname{kernel}\,}
\def\dim{\operatorname{dim}\,}
\def\codim{\operatorname{codim}}
\def\trace{\operatorname{trace\,}}
\def\Span{\operatorname{span}\,}
\def\dimension{\operatorname{dimension}\,}
\def\codimension{\operatorname{codimension}\,}
\def\nullspace{\scriptk}
\def\kernel{\operatorname{Ker}}
\def\ZZ{ {\mathbb Z} }
\def\p{\partial}
\def\rp{{ ^{-1} }}
\def\Re{\operatorname{Re\,} }
\def\Im{\operatorname{Im\,} }
\def\ov{\overline}
\def\eps{\varepsilon}
\def\lt{L^2}
\def\diver{\operatorname{div}}
\def\curl{\operatorname{curl}}
\def\etta{\eta}
\newcommand{\norm}[1]{ \|  #1 \|}
\def\expect{\mathbb E}
\def\bull{$\bullet$\ }
\def\C{\mathbb{C}}
\def\R{\mathbb{R}}
\def\Rn{{\mathbb{R}^n}}
\def\Sn{{{S}^{n-1}}}
\def\M{\mathbb{M}}
\def\N{\mathbb{N}}
\def\Q{{\mathbb{Q}}}
\def\Z{\mathbb{Z}}
\def\F{\mathcal{F}}
\def\L{\mathcal{L}}
\def\S{\mathcal{S}}
\def\supp{\operatorname{supp}}
\def\dist{\operatorname{dist}}
\def\essi{\operatornamewithlimits{ess\,inf}}
\def\esss{\operatornamewithlimits{ess\,sup}}
\def\xone{x_1}
\def\xtwo{x_2}
\def\xq{x_2+x_1^2}
\newcommand{\abr}[1]{ \langle  #1 \rangle}

\newcommand{\Norm}[1]{ \left\|  #1 \right\| }
\newcommand{\set}[1]{ \left\{ #1 \right\} }
\def\one{\mathbf 1}
\def\whole{\mathbf V}
\newcommand{\modulo}[2]{[#1]_{#2}}

\def\scriptf{{\mathcal F}}
\def\scriptg{{\mathcal G}}
\def\scriptm{{\mathcal M}}
\def\scriptb{{\mathcal B}}
\def\scriptc{{\mathcal C}}
\def\scriptt{{\mathcal T}}
\def\scripti{{\mathcal I}}
\def\scripte{{\mathcal E}}
\def\scriptv{{\mathcal V}}
\def\scriptw{{\mathcal W}}
\def\scriptu{{\mathcal U}}
\def\scriptS{{\mathcal S}}
\def\scripta{{\mathcal A}}
\def\scriptr{{\mathcal R}}
\def\scripto{{\mathcal O}}
\def\scripth{{\mathcal H}}
\def\scriptd{{\mathcal D}}
\def\scriptl{{\mathcal L}}
\def\scriptn{{\mathcal N}}
\def\scriptp{{\mathcal P}}
\def\scriptk{{\mathcal K}}
\def\frakv{{\mathfrak V}}

\begin{comment}
\def\scriptx{{\mathcal X}}
\def\scriptj{{\mathcal J}}
\def\scriptr{{\mathcal R}}
\def\scriptS{{\mathcal S}}
\def\scripta{{\mathcal A}}
\def\scriptk{{\mathcal K}}
\def\scriptp{{\mathcal P}}
\def\frakg{{\mathfrak g}}
\def\frakG{{\mathfrak G}}
\def\boldn{\mathbf N}
\end{comment}
%
%If a theorem-like environment should not be numbered,
%add * after \newtheorem, and delete the counter option such as [theorem].
\newtheorem*{remark0}{\indent\sc Remark}
%
%%%%% Proof %%%%%
\renewcommand{\proofname}{\indent\sc Proof.} % Name is small caps
%The following commands are available in the proof environment:
%\begin{proof}
%\end{proof}
%The end of a proof is marked with a square.
%%%%%%%%%%%%%%%%%%%%%%%%%%%%%%%%%%%%%%%%%

\title[Fractional integral operators on mixed $\lambda$-central central  Morrey spaces]
{Fractional integral operators on the mixed $\lambda$-central central  Morrey spaces}
%title of paper and the running head option
%%%%%%%%%%%%%%% footnote %%%%%%%%%%%%%%%%
\renewcommand{\thefootnote}{}
\footnotetext[1]{\textcolor{black}{2020} \textit{Mathematics Subject Classification}. Primary 42B20;
 Secondary 42B25.}
%In case \subjclass[2000] command is not effective
%(or the version of amsart.cls is old), write as follows instead:
%\renewcommand{\thefootnote}{\fnsymbol{footnote}}
%\footnote[0]{2000\textit{ Mathematics Subject Classification}.

%Primary 00; Secondary 00.}
%
\keywords{ Fractional integral operators, commutators, mixed $\lambda$-central  Morrey spaces, $\lambda$-central mixed BMO spaces.}
\thanks{The second author is the corresponding author.}
\thanks{The research was supported by the NNSF of China (No. 12061069.)}
\date{\today}
%%%%%%%%%%%% Authors addresses %%%%%%%%%%%%%

\author[W. Lu]{Wenna Lu}
\address{Wenna Lu:
College of Mathematics and System Sciences\\ Xinjiang University\\
\"{U}r\"{u}mqi  830046\\ People's Republic of China}
\email{luwnmath@126.com}

\author[J. Zhou]{Jiang Zhou}
\address{Jiang Zhou: College of Mathematics and System Sciences\\ Xinjiang University\\
\"{U}r\"{u}mqi  830046\\ People's Republic of China}
\email{zhoujiang@xju.edu.cn}
%\address{}
%\email{}

%%%%%%%%%%%%%%%%%%%%%%%%%%%%%%%%%%%%%%%%%
\maketitle
\begin{abstract}
In this paper, the authors define the mixed $\lambda$-central  Morrey spaces and the mixed $\lambda$-central $BMO$ spaces. The boundedness of the fractional integral operators  $T_{\alpha}$ and its commutators $[b, T_{\alpha}]$ are established  on the mixed $\lambda$-central Morrey spaces, respectively. Furthermore, we also extend these results to the generalized mixed central Morrey spaces.
\end{abstract}

\section{Introduction}
In the past few decades, the subject of the mixed-norm function spaces has undergone a vast development. Nevertheless, the standard literature is still the mixed Lebesgue spaces $L^{\vec{p}}(\mathbb{R}^{n})(0<\vec{p}\leq\infty)$, as a natural generalization of the classical Lebesgue spaces $L^{p}(\mathbb{R}^{n})(0<p\leq\infty)$, it is first introduced by Benedek and Panzone \cite{BP} in 1961. In fact, the more accurate structure of the mixed-norm function spaces than the corresponding classical function spaces such that the mixed-norm function spaces have more extensive applications in various areas such as potential analysis, harmonic analysis and partial differential equations, see \cite{AI,KC,KD,KN} and the references therein. Therefore, the topic of function spaces with the mixed norm has increasing considerable attention and much developments in recent years. For instance, weak mixed-norm Lebesgue spaces \cite{CS}, mixed-norm Besov spaces and Triebel-Lizorkin spaces \cite{GN,JS}, mixed-norm Lorentz spaces and Orlicz spaces \cite{FD,MM2}, mixed-norm Lorentz-Marcinkiewicz spaces \cite{MM1} and anisotropic mixed-norm Hardy spaces \cite{CGN}, etc.

  %Morrey spaces, as an extension of the classical Lebesgue spaces $L^{p}(\mathbb{R}^{n})$,  are widely used in the study of the regularity of %elliptic partial differential equations.
  It is well known that the classical Morrey spaces, as an important generalization of the Lebesgue space, are introduced by Morrey \cite{CM} in 1938. Up till now, the Morrey spaces have become one of the most important function spaces in the theory of function spaces due to its  potential applications .
  %By a slight modification, Mizuhara \cite{TM} introduced the generalized Morrey spaces and studied the boundedness of many important operators in harmonic analysis.
  In 2019, Nogayama \cite{TN1}, to generalize Morrey spaces and mixed Lebesgue spaces, introduced the new Morrey type space, which is called the mixed Morrey space.
  %In the same year, Nogayama \cite{TN2} gets boundedness of commutators of fractional integral operators on mixed Morrey spaces.
  Recently, Wei given the definiton of the  mixed central Morrey space in \cite{WM1}. Moreover, these function spaces have interesting applications in studying boundedness of operators including singular integral operators,  maximal operators, the fractional integral operators and their commutators.

  %and obtained the boundedness of the weighted Hardy-Littlewood average and its commutator on mixed central Morrey spaces.

Motivated by \cite{CM,WM1,TN1,TN2}, the purpose of this paper is to give the difinitions of the $\lambda$-central Morrey spaces and $\lambda$-central $BMO$ spaces with mixed norm, respectively,  and then we establish the boundedness of fractional integral operator $T_{\alpha}$ and its commutator $[b,T_{\alpha}]$ on the $\lambda$-central Morrey spaces with mixed norm. Furthermore, we also generalize these consequences to the generalized central Morrey spaces with mixed norm.

Let us explain the outline of this article. In Section 2, we first briefly recall the mixed Lebesgue
spaces and the mixed central Morrey spaces. Then we will define the $\lambda$-central $BMO$ spaces and $\lambda$-central Morrey spaces with mixed norm, and obtain some properties of these spaces. In Section 3, we will establish the boundedness of Calder\'{o}n-Zygmund operators, the fractional integral operators  and their commutators on the $\lambda$-central Morrey spaces with mixed norm. In Section 4, we will further extend these results to the generalized central Morrey spaces with mixed norm,

As a rule, for any set $E\in\mathbb{R}^n$, ${\chi}_{E}$ denotes its characteristic function and $E^{c}$ denotes its complementary set, we also denote the Lebesgue measure by $|E|$. Let $\mathscr{M}(\mathbb{R}^{n})$ be the class of Lebesgue measurable functions on $\mathbb{R}^{n}$.  We use the symbol $f\lesssim g$ to denote there exists a positive constant $C$ such that $f\leq Cg $, and the notation $f\thickapprox g$ means that there exist positive constants $C_1, C_2$ such that $C_1 g\leq f\leq C_2g$. Throughout this paper, the letter $\vec{p}$ denotes $n$-tuples of the numbers in $(0,\infty],n\geq1$, $\vec{p}=(p_{1},p_{2}, \cdots,p_{n})$, $0<\vec{p}\leq\infty$ means $0<p_{i}\leq\infty$ for each $i$. Furthermore, for $1\leq\vec{p}\leq\infty$, let $$\frac{1}{\vec{p}}=(\frac{1}{p_{1}},\frac{1}{p_{2}},\cdots,\frac{1}{p_{n}}), \quad\vec{p}'=(p_{1}',p_{2}',\cdots,p_{n}'),$$
where $p_{i}'=\frac{p_{i}}{p_{i}-1} (i=1,2,\cdots,n)$ is the conjugate exponent of $p_i$.

\section{$\lambda$-central  Morrey spaces and $BMO$ spaces with mixed norm}
In this section, we first recall the definition of the mixed Lebesgue spaces  and then give the definitions of  $\lambda$-central Morrey spaces and $BMO$ spaces with mixed norm.

For $0<\vec{p}<\infty$, the mixed Lebesgue norm $\|\cdot\|_{L^{\vec{p}}(\mathbb{R}^{n})}$ is defined by
$$\|f\|_{L^{\vec{p}}(\mathbb{R}^{n})}:=\Bigg(\int_{\mathbb{R}}\cdots\bigg(\int_{\mathbb{R}}\big(\int_{\mathbb{R}}|f(x_{1},x_{2},\cdots,x_{n})|^{p_{1}}
dx_{1}\big)^{\frac{p_{2}}{p_{1}}}dx_{2}\bigg)^{\frac{p_{3}}{p_{2}}}\cdots dx_{n}\Bigg)^{\frac{1}{p_{n}}},$$
where $f:\mathbb{R}^{n}\rightarrow\mathbb{C}$ is a measurable function. If $p_{j}=\infty$, for some $j=1,\cdots,n$, we only need to make some suitable modifications.  We define the mixed Lebesgue space $L^{\vec{p}}(\mathbb{R}^{n})$ to be the set of all $f\in\mathscr{M}(\mathbb{R}^{n})$ with $\|f\|_{L^{\vec{p}}(\mathbb{R}^{n})}<\infty$. The mixed Lebesgue space $L^{\vec{p}}(\mathbb{R}^{n})$ reduces to the classical Lebesgue spaces
$L^{p}(\mathbb{R}^{n})$ when $p_{1}=p_{2}=\cdots=p_{n}=p$, and we refer the readers to \cite{BP} for more details.

Nagayama replaced the $L^{q}(\mathbb{R}^{n})$ norm by the mixed $L^{\vec{q}}(\mathbb{R}^{n})$ norm in the definition of classical Morrey space $M^{p}_{q}(\mathbb{R}^{n})$ in \cite{CM}, introduced  the mixed Morrey spaces $M^{p}_{\vec{q}}(\mathbb{R}^{n})$ in \cite{TN1}. Very recently, Wei \cite{WM1} introduced mixed central Morrey spaces $\mathcal{M}^{p}_{\vec{q}}(\mathbb{R}^{n})$ as follows.

\quad\hspace{-22pt}{\bf Definition 2.1}(\cite{WM1}) {For $\vec{q}=(q_{1},\cdots,q_{n})\in(0,\infty]^{n}$ and $p\in(0,\infty]$ satisfy
$$\sum\limits_{j=1}^{n}\frac{1}{q_{j}}\geq\frac{n}{p},$$
the mixed central Morrey space is defined by
$$\mathcal{M}^{p}_{\vec{q}}(\mathbb{R}^{n}):=\{f\in\mathscr{M}(\mathbb{R}^{n}):\|f\|_{\mathcal{M}^{p}_{\vec{q}}(\mathbb{R}^{n})}<\infty\},$$
where the mixed central Morrey quasi-norm
$$\|f\|_{\mathcal{M}^{p}_{\vec{q}}(\mathbb{R}^{n})}:=\sup\limits_{r>0}|B(0,r)|^{\frac{1}{p}-\frac{1}{n}\big(\sum^{n}_{j=1}\frac{1}{q_{j}}\big)}
\|f\chi_{\mathrm{B}(0,r)}\|_{L^{\vec{q}}(\mathbb{R}^{n})}.$$
}

In the study of the theory of commutators, there is the class of function space that has quite different from bounded mean oscillation space $BMO(\mathbb{R}^{n})$, that is,  central bounded mean oscillation space $CBMO^q(\mathbb{R}^{n})(1\leq q<\infty)$, which is introduced by Lu and Yang
in \cite{LY2}.
Now we recall the  definition of $CBMO^q(\mathbb{R}^{n})$  as follows.

For $1\leq q<\infty$, a function $f\in L^{q}_{loc}(\mathbb{R}^{n})$ is said to belong to $CBMO^q(\mathbb{R}^{n})$ if
$$\|f\|_{CBMO^q(\mathbb{R}^{n})}:=\sup\limits_{r>0}\Big(\frac{1}{|B(0,r)|}\int_{B(0,r)}|f(x)-f_{B(0,r)}|^qdx\Big)^{1/q}<\infty,$$
here and in the sequel, for $r>0$, $B(0,r)$ denote by the open ball centered at $0$ of radius $r$, $|B(0,r)|$ the Lebesgue measure of the ball $B(0,r)$ and
$$f_{B(0,r)}=\frac{1}{|B(0,r)|}\int_{B(0,r)}f(x)dx.$$
In fact, the space $CBMO^q(\mathbb{R}^{n})$ can be regarded as a local version of $BMO(\mathbb{R}^{n})$. However, their properties have quite different, for example, the famous John-Nirenberg inequality for $BMO(\mathbb{R}^{n})$ space no longer holds in the $CBMO^q(\mathbb{R}^{n})$ space. In addition,  $BMO(\mathbb{R}^{n})$ is strictly included in $\cap_{q>1}CBMO^q$.
\medskip

The corresponding mixed $CBMO^q(\mathbb{R}^{n})$ is given as follows.

\quad\hspace{-22pt}{\bf Definition 2.2}(\cite{WM1}) {Let $\vec{q}=(q_{1},\cdots,q_{n})\in[1,\infty)^{n}$, then the mixed central bounded mean oscillation space $CBMO^{\vec{q}}(\mathbb{R}^{n})$ is defined by
$$CBMO^{\vec{q}}(\mathbb{R}^{n}):=\{f\in L^{q}_{loc}(\mathbb{R}^{n}):\|f\|_{CBMO^{\vec{q}}(\mathbb{R}^{n})}<\infty\},$$
where
$$\|f\|_{CBMO^{\vec{q}}(\mathbb{R}^{n})}=\sup\limits_{r>0}\frac{\|(f-f_{B(0,r)})\chi_{B(0,r)}\|_{L^{\vec{q}}(\mathbb{R}^{n})}}
{\|\chi_{B(0,r)}\|_{L^{\vec{q}}(\mathbb{R}^{n})}}.$$
}

In addition, the classical $\lambda$-central $BMO(\mathbb{R}^{n})$ and $\lambda$-central Morrey spaces $\mathcal{B}^{q,\lambda}(\mathbb{R}^n)$, as generations of the spaces $BMO(\mathbb{R}^{n})$ and $M^{p}_{{q}}(\mathbb{R}^{n})$, are rather important function  spaces, which were introduced by Alvarez, Guzm\'{a}n-Partida and Lakey in \cite{JM}. It is based on these facts, we will further extend these function spaces to the case of the mixed norm.

\quad\hspace{-22pt}{\bf Definition 2.3} {Let $1<\vec{q}<\infty$ and $\lambda\in\mathbb{R}$,
the mixed $\lambda$-central Morrey space $\mathcal{B}^{\vec{q},\lambda}(\mathbb{R}^n)$ is defined by
$$\mathcal{B}^{\vec{q},\lambda}(\mathbb{R}^n):=\{f\in L^{\vec{q}}_{loc}(\mathbb{R}^{n}):\|f\|_{\mathcal{B}^{\vec{q},\lambda}(\mathbb{R}^n)}<\infty\},$$
where
$$\|f\|_{\mathcal{B}^{\vec{q},\lambda}(\mathbb{R}^n)}:=\sup\limits_{r>0}\frac{\|f\chi_{B(0,r)}\|_{L^{\vec{q}}(\mathbb{R}^{n})}}{|B(0,r)|^{\lambda}\|\chi_{B(0,r)}\|_{L^{\vec{q}}(\mathbb{R}^{n})}}.$$
}

\quad\hspace{-22pt}{\bf Remark 2.1} {If $\lambda=-\frac{1}{p}$, then the space $\mathcal{B}^{\vec{q},\lambda}(\mathbb{R}^n)$ reduces to Definition 2.1. The space $\mathcal{B}^{\vec{q},\lambda}(\mathbb{R}^n)$ is just the $\mathcal{B}^{q,\lambda}(\mathbb{R}^n)$ introduced in \cite{JM} when $\vec{q}=q$.
}

\quad\hspace{-22pt}{\bf Definition 2.4} {Let $1<\vec{q}<\infty$ and $\lambda<\frac{1}{n}$,
the mixed $\lambda$-central  bounded mean oscillation space $CBMO^{\vec{q},\lambda}(\mathbb{R}^{n})$ is defined by
$$CBMO^{\vec{q},\lambda}(\mathbb{R}^{n}):=\{f\in L^{\vec{q}}_{loc}(\mathbb{R}^{n}):\|f\|_{CBMO^{\vec{q},\lambda}(\mathbb{R}^{n})}<\infty\},$$
where
$$\|f\|_{CBMO^{\vec{q},\lambda}(\mathbb{R}^{n})}:=\sup\limits_{r>0}\frac{\|(f-f_{B(0,r)})\chi_{B(0,r)}\|_{L^{\vec{q}}(\mathbb{R}^{n})}}{|B(0,r)|^{\lambda}\|\chi_{B(0,r)}\|_{L^{\vec{q}}(\mathbb{R}^{n})}}.$$
}

\quad\hspace{-22pt}{\bf Remark 2.2} {If $\lambda=0$, then we can easily get the Definition 2.2. The space $CBMO^{\vec{q},\lambda}(\mathbb{R}^{n})$ is
just the $CBMO^{q,\lambda}(\mathbb{R}^{n})$ given in \cite{JM} when $\vec{q}=q$.
}

Next we give the relationship between the spaces $\mathcal{B}^{\vec{q},\lambda}(\mathbb{R}^n)$ and $CBMO^{\vec{q},\lambda}(\mathbb{R}^{n})$.

\begin{theorem} \label{th2.1}
Let $1<\vec{q}<\infty$, if $\lambda\in\mathbb{R}$, then $\mathcal{B}^{\vec{q},\lambda}(\mathbb{R}^n)$ is a Banach space; if $\lambda<\frac{1}{n}$, then $\mathcal{B}^{\vec{q},\lambda}(\mathbb{R}^n)$ is continuously included in $CBMO^{\vec{q},\lambda}(\mathbb{R}^{n})$.
\end{theorem}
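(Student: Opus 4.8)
The plan is to prove the two assertions separately, since they are of different flavors. For the Banach space claim, the first step is to verify that $\|\cdot\|_{\mathcal{B}^{\vec{q},\lambda}}$ is a genuine norm: positive homogeneity and the triangle inequality both descend immediately from the corresponding properties of the mixed Lebesgue (quasi-)norm $\|\cdot\|_{L^{\vec{q}}}$, which for $1<\vec{q}<\infty$ is a norm; the quotient by the fixed positive quantity $|B(0,r)|^\lambda\|\chi_{B(0,r)}\|_{L^{\vec q}}$ and the supremum over $r$ preserve these. Definiteness (that $\|f\|_{\mathcal{B}^{\vec{q},\lambda}}=0$ implies $f=0$ a.e.) follows because the norm controls $\|f\chi_{B(0,r)}\|_{L^{\vec{q}}}$ for every $r>0$, forcing $f=0$ on each ball $B(0,r)$ and hence on $\mathbb{R}^n$.

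The substantive part is completeness. I would take a Cauchy sequence $(f_k)$ in $\mathcal{B}^{\vec{q},\lambda}$ and observe that for each fixed $r>0$ the sequence $(f_k\chi_{B(0,r)})$ is Cauchy in $L^{\vec{q}}(\mathbb{R}^n)$, because $\|(f_k-f_m)\chi_{B(0,r)}\|_{L^{\vec q}} \le |B(0,r)|^\lambda \|\chi_{B(0,r)}\|_{L^{\vec q}} \, \|f_k-f_m\|_{\mathcal{B}^{\vec q,\lambda}}$. Since $L^{\vec q}(\mathbb{R}^n)$ is complete (Benedek--Panzone), there is a limit in $L^{\vec q}(B(0,r))$ for each $r$; a standard diagonalization over a sequence $r_j\to\infty$ together with the compatibility of the limits on nested balls produces a single measurable function $f$ on $\mathbb{R}^n$ with $f\chi_{B(0,r)}\in L^{\vec q}$ for all $r$. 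Finally, fixing $\varepsilon>0$ and $N$ so that $\|f_k-f_m\|_{\mathcal{B}^{\vec q,\lambda}}<\varepsilon$ for $k,m\ge N$, one lets $m\to\infty$ inside the defining supremum (using lower semicontinuity of $\|\cdot\chi_{B(0,r)}\|_{L^{\vec q}}$ under the $L^{\vec q}(B(0,r))$ convergence, or just that the quotient at each fixed $r$ passes to the limit) to get $\|f_k-f\|_{\mathcal{B}^{\vec q,\lambda}}\le\varepsilon$ for $k\ge N$; in particular $f\in\mathcal{B}^{\vec q,\lambda}$ and $f_k\to f$. I expect the mild bookkeeping in the diagonal argument (checking the local limits agree and assemble to one function) to be the only place requiring care.

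For the inclusion $\mathcal{B}^{\vec q,\lambda}\hookrightarrow CBMO^{\vec q,\lambda}$ when $\lambda<\tfrac1n$, the idea is to estimate the numerator of the $CBMO^{\vec q,\lambda}$ norm directly. For each $r>0$, by the triangle inequality in $L^{\vec q}$,
$$\|(f-f_{B(0,r)})\chi_{B(0,r)}\|_{L^{\vec q}} \le \|f\chi_{B(0,r)}\|_{L^{\vec q}} + |f_{B(0,r)}|\,\|\chi_{B(0,r)}\|_{L^{\vec q}}.$$
The first term is at most $|B(0,r)|^\lambda\|\chi_{B(0,r)}\|_{L^{\vec q}}\|f\|_{\mathcal{B}^{\vec q,\lambda}}$ by definition. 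For the second term one bounds the average $|f_{B(0,r)}|\le |B(0,r)|^{-1}\int_{B(0,r)}|f|$ and applies Hölder's inequality in the mixed-norm setting to get $\int_{B(0,r)}|f| \lesssim \|f\chi_{B(0,r)}\|_{L^{\vec q}}\,\|\chi_{B(0,r)}\|_{L^{\vec q'}}$; combining with the pointwise identities $\|\chi_{B(0,r)}\|_{L^{\vec q}}\approx |B(0,r)|^{\sum 1/q_j \cdot \frac1n}$ and the analogous one for $\vec q'$ (these follow since $B(0,r)$ is comparable to a cube of sidelength $\sim r$), the powers of $|B(0,r)|$ recombine so that the whole second term is $\lesssim |B(0,r)|^\lambda\|\chi_{B(0,r)}\|_{L^{\vec q}}\|f\|_{\mathcal{B}^{\vec q,\lambda}}$ as well. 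Dividing by $|B(0,r)|^\lambda\|\chi_{B(0,r)}\|_{L^{\vec q}}$ and taking the supremum over $r$ yields $\|f\|_{CBMO^{\vec q,\lambda}}\lesssim\|f\|_{\mathcal{B}^{\vec q,\lambda}}$, which is exactly continuous inclusion. The main obstacle here is purely computational: one must track the exponents of $|B(0,r)|$ coming from $\|\chi_{B(0,r)}\|_{L^{\vec q}}$ and $\|\chi_{B(0,r)}\|_{L^{\vec q'}}$ carefully and confirm they cancel against the $|B(0,r)|^{-1}$ from the average; the hypothesis $\lambda<\tfrac1n$ does not even enter this bound (it is needed only for $CBMO^{\vec q,\lambda}$ to be a sensible, nontrivial space), so no delicate case analysis is required.
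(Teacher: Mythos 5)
Your proof is correct. For the continuous inclusion $\mathcal{B}^{\vec q,\lambda}\hookrightarrow CBMO^{\vec q,\lambda}$ your argument is essentially identical to the paper's: split off the average, bound $|f_{B(0,r)}|$ by H\"older in the mixed norm, and use $\|\chi_{B(0,r)}\|_{L^{\vec q}}\|\chi_{B(0,r)}\|_{L^{\vec q\,'}}\approx|B(0,r)|$ so that the exponents cancel; you are also right that $\lambda<\tfrac1n$ plays no role in the estimate itself. Where you genuinely diverge is the completeness proof. The paper runs the Riesz--Fischer--type argument internally to $\mathcal{B}^{\vec q,\lambda}$: extract a subsequence with $\|f_{n_{k+1}}-f_{n_k}\|_{\mathcal{B}^{\vec q,\lambda}}<2^{-k}$, define $f$ by the telescoping series and dominate it by the series $g$ of absolute values, show $\|g\|_{\mathcal{B}^{\vec q,\lambda}}<\infty$ via Minkowski, and conclude both that $f$ lies in the space and that the tail estimate gives norm convergence. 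You instead localize: the $\mathcal{B}^{\vec q,\lambda}$--Cauchy condition makes $(f_k\chi_{B(0,r)})$ Cauchy in the complete space $L^{\vec q}(\mathbb{R}^n)$ for each $r$, and a diagonal/gluing step over $r_j\to\infty$ produces the limit, after which you pass to the limit in the defining quotient at each fixed $r$. Both routes are standard and sound; the paper's series approach gets membership of $f$ in the space and the rate of convergence in one stroke and avoids the compatibility bookkeeping on nested balls, while yours leans directly on Benedek--Panzone completeness of $L^{\vec q}$ and avoids having to justify the a.e.\ convergence of the telescoping series (which the paper handles only implicitly through the finiteness of $g$). The one point you should make explicit if you write this up is that the local $L^{\vec q}$ limits on nested balls agree a.e.\ (e.g.\ via a.e.\ convergence of a further subsequence), which you flag as ``bookkeeping'' but do not carry out.
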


\begin{proof}[Proof]
Firstly we will prove $\mathcal{B}^{\vec{q},\lambda}(\mathbb{R}^n)$ is a Banach space. Let $\{f_{k}\}$ be a Cauchy sequence in $\mathcal{B}^{\vec{q},\lambda}(\mathbb{R}^n)$. Then for $\epsilon>0$, there exists $N(\epsilon)\in \mathbb{N}$ such that $\|f_{i}-f_{j}\|_{\mathcal{B}^{\vec{q},\lambda}(\mathbb{R}^n)}<\epsilon$ for $i,j>N(\epsilon)$. Let $n_{k}=N(\frac{1}{2^{k}})$, then there is a subsequence, denoted by $\{f_{n_{k}}\}$, such that
$$\|f_{n_{k+1}}-f_{n_{k}}\|_{\mathcal{B}^{\vec{q},\lambda}(\mathbb{R}^n)}<\frac{1}{2^{k}}.$$
We set
$$f(x)=f_{n_{1}}(x)+\sum\limits_{k=1}^{\infty}\big(f_{n_{k+1}}(x)-f_{n_{k}}(x)\big), \quad\quad x\in\mathbb{R}^{n},$$
and
$$g(x)=|f_{n_{1}}(x)|+\sum\limits_{k=1}^{\infty}\big|f_{n_{k+1}}(x)-f_{n_{k}}(x)\big|, \quad\quad x\in\mathbb{R}^{n}.$$
Let $S_{N}(f)=f_{n_{1}}+\sum_{k=1}^{N-1}(f_{n_{k+1}}-f_{n_{k}})=f_{n_{N}}$ and $S_{N}(g)=|f_{n_{1}}|+\sum_{k=1}^{N-1}|f_{n_{k+1}}-f_{n_{k}}|.$
Then by the Minkowski inequality, we have
\begin{align*}
\quad\quad&\|S_{N}(g)\|_{\mathcal{B}^{\vec{q},\lambda}(\mathbb{R}^n)}=
\sup\limits_{r>0}\frac{\|S_{N}(g)\chi_{B(0,r)}\|_{L^{\vec{q}}(\mathbb{R}^{n})}}{|B(0,r)|^{\lambda}\|\chi_{B(0,r)}\|_{L^{\vec{q}}(\mathbb{R}^{n})}}\\
&\quad\lesssim\sup\limits_{r>0}\frac{\|f_{n_{1}}\chi_{B(0,r)}\|_{L^{\vec{q}}(\mathbb{R}^{n})}}{|B(0,r)|^{\lambda}\|\chi_{B(0,r)}\|_{L^{\vec{q}}(\mathbb{R}^{n})}}+
\sum\limits_{k=1}^{N-1}\sup\limits_{r>0}\frac{\|(f_{n_{k+1}}-f_{n_{k}})\chi_{B(0,r)}\|_{L^{\vec{q}}(\mathbb{R}^{n})}}{|B(0,r)|^{\lambda}\|\chi_{B(0,r)}\|_{L^{\vec{q}}(\mathbb{R}^{n})}}\\
&\quad\leq\|f_{n_{1}}\|_{\mathcal{B}^{\vec{q},\lambda}(\mathbb{R}^n)}+\sum\limits_{k=1}^{N-1}\frac{1}{2^{k}}\lesssim
\|f_{n_{1}}\|_{\mathcal{B}^{\vec{q},\lambda}(\mathbb{R}^n)}+1.
\end{align*}
So we have $\|g\|_{\mathcal{B}^{\vec{q},\lambda}(\mathbb{R}^n)}<\infty$. Noting that $|f|\leq g$, then $\|f\|_{\mathcal{B}^{\vec{q},\lambda}(\mathbb{R}^n)}\leq\|g\|_{\mathcal{B}^{\vec{q},\lambda}(\mathbb{R}^n)}<\infty$. Thus we have $f\in\mathcal{B}^{\vec{q},\lambda}(\mathbb{R}^n)$.
Since
\begin{align*}
\lim\limits_{N\rightarrow\infty}\|f-f_{n_{N}}\|_{\mathcal{B}^{\vec{q},\lambda}(\mathbb{R}^n)}&\leq\lim\limits_{N\rightarrow\infty}
\sum\limits_{k=N}^{\infty}\|f_{n_{k+1}}-f_{n_{k}}\|_{\mathcal{B}^{\vec{q},\lambda}(\mathbb{R}^n)}\\
&\leq\lim\limits_{N\rightarrow\infty}\sum\limits_{k=N}^{\infty}2^{-k}=\lim\limits_{N\rightarrow\infty}2^{-N+1}=0.
\end{align*}
From this we know that the sequence $\{f_{n_{k}}\}$ converges to $f$ in $\mathcal{B}^{\vec{q},\lambda}(\mathbb{R}^n)$, then the Cauchy
sequence $\{f_{k}\}$ converges to $f$ in $\mathcal{B}^{\vec{q},\lambda}(\mathbb{R}^n)$. Thus $\mathcal{B}^{\vec{q},\lambda}(\mathbb{R}^n)$ is a Banach space.

By the H\"{o}lder inequality we have
\begin{align*}
&\|f\|_{CBMO^{\vec{q},\lambda}(\mathbb{R}^{n})}=
\sup\limits_{r>0}\frac{\|(f-f_{B(0,r)})\chi_{B(0,r)}\|_{L^{\vec{q}}(\mathbb{R}^{n})}}{|B(0,r)|^{\lambda}\|\chi_{B(0,r)}\|_{L^{\vec{q}}(\mathbb{R}^{n})}}\\
&\quad\lesssim\Big(\sup\limits_{r>0}\frac{\|f\chi_{B(0,r)}\|_{L^{\vec{q}}(\mathbb{R}^{n})}}{|B(0,r)|^{\lambda}\|\chi_{B(0,r)}\|_{L^{\vec{q}}(\mathbb{R}^{n})}}
+\sup\limits_{r>0}\frac{\|f_{B(0,r)}\chi_{B(0,r)}\|_{L^{\vec{q}}(\mathbb{R}^{n})}}{|B(0,r)|^{\lambda}\|\chi_{B(0,r)}\|_{L^{\vec{q}}(\mathbb{R}^{n})}}\Big)\\
&\quad\leq\Big(\sup\limits_{r>0}\frac{\|f\chi_{B(0,r)}\|_{L^{\vec{q}}(\mathbb{R}^{n})}}{|B(0,r)|^{\lambda}\|\chi_{B(0,r)}\|_{L^{\vec{q}}(\mathbb{R}^{n})}}
+\sup\limits_{r>0}\frac{\|\chi_{B(0,r)}\|_{L^{\vec{q}}(\mathbb{R}^{n})}\cdot|f_{B(0,r)}|}{|B(0,r)|^{\lambda}\|\chi_{B(0,r)}\|_{L^{\vec{q}}(\mathbb{R}^{n})}}\Big)\\
&\quad\lesssim\Big(\sup\limits_{r>0}\frac{\|f\chi_{B(0,r)}\|_{L^{\vec{q}}(\mathbb{R}^{n})}}{|B(0,r)|^{\lambda}\|\chi_{B(0,r)}\|_{L^{\vec{q}}(\mathbb{R}^{n})}}\\
&\quad\quad+\sup\limits_{r>0}
\frac{\|\chi_{B(0,r)}\|_{L^{\vec{q}}(\mathbb{R}^{n})}\cdot\big(\frac{1}{B(0,r)}\|f\chi_{B(0,r)}\|_{L^{\vec{q}}(\mathbb{R}^{n})}\cdot
\|\chi_{B(0,r)}\|_{L^{\vec{q}'}(\mathbb{R}^{n})}\big)}{|B(0,r)|^{\lambda}\|\chi_{B(0,r)}\|_{L^{\vec{q}}(\mathbb{R}^{n})}}\Big)\\
&\quad\lesssim\Big(\sup\limits_{r>0}\frac{\|f\chi_{B(0,r)}\|_{L^{\vec{q}}(\mathbb{R}^{n})}}{|B(0,r)|^{\lambda}\|\chi_{B(0,r)}\|_{L^{\vec{q}}(\mathbb{R}^{n})}}\Big)\\
&\quad\lesssim\|f\|_{\mathcal{B}^{\vec{q},\lambda}(\mathbb{R}^n)},
\end{align*}
so we obtain $\mathcal{B}^{\vec{q},\lambda}(\mathbb{R}^n)$ is continuously included in $CBMO^{\vec{q},\lambda}(\mathbb{R}^{n})$.
Thus we complete the proof of Theorems \ref{th2.1}.
\end{proof}

In the application of the space $CBMO^{\vec{q},\lambda}(\mathbb{R}^{n})$, we often use the following significant property.

\begin{proposition}\label{pro2.1} Suppose that $f\in CBMO^{\vec{q},\lambda}(\mathbb{R}^{n}), 1<\vec{q}<\infty, \lambda<\frac{1}{n}$ and $r_{1},r_{2}\in\mathbb{R}^+$. Then
$$\frac{\|(b-b_{B(0,r_{2})})\chi_{B(0,r_{1})}\|_{L^{\vec{q}}(\mathbb{R}^{n})}}{|B(0,r_{1})|^{\lambda}\|\chi_{B(0,r_{1})}\|_{L^{\vec{q}}(\mathbb{R}^{n})}}\lesssim \Big(1+\Big|\ln\frac{r_{1}}{r_{2}}\Big|\Big)\|f\|_{CBMO^{\vec{q},\lambda}(\mathbb{R}^{n})}.$$
\end{proposition}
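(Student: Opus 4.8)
The plan is a dyadic chaining argument, parallel to the case $\vec q=q$ treated in \cite{JM}. Write $b=f$ as in the statement and assume without loss of generality that $r_1\le r_2$ (the case $r_1\ge r_2$ is the same telescoping read in the opposite direction, since the left-hand quantity involves only $\chi_{B(0,r_1)}$ and $|B(0,r_1)|^{\lambda}$). On $B(0,r_1)$ decompose $b-b_{B(0,r_2)}=(b-b_{B(0,r_1)})+(b_{B(0,r_1)}-b_{B(0,r_2)})$ and apply the triangle inequality in $L^{\vec q}(\mathbb R^{n})$:
$$\|(b-b_{B(0,r_2)})\chi_{B(0,r_1)}\|_{L^{\vec q}}\le \|(b-b_{B(0,r_1)})\chi_{B(0,r_1)}\|_{L^{\vec q}}+|b_{B(0,r_1)}-b_{B(0,r_2)}|\,\|\chi_{B(0,r_1)}\|_{L^{\vec q}}.$$
Dividing by $|B(0,r_1)|^{\lambda}\|\chi_{B(0,r_1)}\|_{L^{\vec q}}$, the first summand is $\le\|f\|_{CBMO^{\vec q,\lambda}}$ directly by Definition 2.4, so the proposition reduces to the scalar bound
$$\frac{|b_{B(0,r_1)}-b_{B(0,r_2)}|}{|B(0,r_1)|^{\lambda}}\lesssim\Bigl(1+\bigl|\ln\tfrac{r_1}{r_2}\bigr|\Bigr)\|f\|_{CBMO^{\vec q,\lambda}}.$$

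Next I would isolate a one-scale estimate: if $0<\rho\le\sigma\le 2\rho$, then $|b_{B(0,\rho)}-b_{B(0,\sigma)}|\lesssim|B(0,\sigma)|^{\lambda}\|f\|_{CBMO^{\vec q,\lambda}}$. Since $B(0,\rho)\subset B(0,\sigma)$, the mixed-norm Hölder inequality (the same one invoked in the proof of Theorem~\ref{th2.1}) gives
$$|b_{B(0,\rho)}-b_{B(0,\sigma)}|\le\frac1{|B(0,\rho)|}\int_{B(0,\sigma)}|b-b_{B(0,\sigma)}|\le\frac{\|(b-b_{B(0,\sigma)})\chi_{B(0,\sigma)}\|_{L^{\vec q}}\,\|\chi_{B(0,\sigma)}\|_{L^{\vec q'}}}{|B(0,\rho)|}.$$
Two elementary scaling facts close this step: $\|\chi_{B(0,t)}\|_{L^{\vec q}}\thickapprox t^{\sum_j 1/q_j}$ for every $t>0$, so that $\|\chi_{B(0,\sigma)}\|_{L^{\vec q}}\|\chi_{B(0,\sigma)}\|_{L^{\vec q'}}\thickapprox\sigma^{\sum_j 1/q_j+\sum_j 1/q_j'}=\sigma^{n}\thickapprox|B(0,\sigma)|$, and $|B(0,\rho)|\thickapprox|B(0,\sigma)|$ because $\rho\thickapprox\sigma$. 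Substituting these and then invoking Definition 2.4 on the ball $B(0,\sigma)$ yields the claim.

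The last step is to telescope. Pick $N\in\mathbb N$ with $2^{N-1}<r_2/r_1\le 2^{N}$, so $N\thickapprox 1+|\ln(r_1/r_2)|$, and set $\rho_k=2^{k}r_1$ for $0\le k\le N$, noting $r_2\le\rho_N\le 2r_2$. Then
$$b_{B(0,r_1)}-b_{B(0,r_2)}=\sum_{k=0}^{N-1}\bigl(b_{B(0,\rho_k)}-b_{B(0,\rho_{k+1})}\bigr)+\bigl(b_{B(0,\rho_N)}-b_{B(0,r_2)}\bigr),$$
and each difference on the right is an admissible one-scale pair (consecutive ratio $2$, resp. ratio in $[1,2]$). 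Applying the one-scale estimate to every term and dividing by $|B(0,r_1)|^{\lambda}$,
$$\frac{|b_{B(0,r_1)}-b_{B(0,r_2)}|}{|B(0,r_1)|^{\lambda}}\lesssim\|f\|_{CBMO^{\vec q,\lambda}}\sum_{k=1}^{N}\frac{|B(0,\rho_k)|^{\lambda}}{|B(0,r_1)|^{\lambda}}\thickapprox\|f\|_{CBMO^{\vec q,\lambda}}\sum_{k=1}^{N}2^{kn\lambda}.$$
Since $\lambda<\tfrac1n$, estimating the geometric sum $\sum_{k=1}^{N}2^{kn\lambda}$ against $N\thickapprox 1+|\ln(r_1/r_2)|$ produces exactly the advertised logarithmic factor and completes the proof.

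I expect the only genuinely delicate point to be the scale-uniform bookkeeping: the Hölder step hinges on the cancellation $\sum_j 1/q_j+\sum_j 1/q_j'=n$, which turns $\|\chi_{B}\|_{L^{\vec q}}\|\chi_{B}\|_{L^{\vec q'}}$ into $|B|$ up to dimensional constants, and the telescoping then needs the geometric series controlled by $N$ uniformly in $r_1,r_2$ — this is where the hypothesis $\lambda<1/n$ enters. Apart from these comparisons the classical central-oscillation argument of \cite{JM} carries over verbatim, with $\|\chi_{B(0,r)}\|_{L^{\vec q}}$ playing the role previously played by $|B(0,r)|^{1/q}$.
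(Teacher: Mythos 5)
Your overall architecture --- triangle inequality, a one-scale H\"older estimate exploiting $\|\chi_{B}\|_{L^{\vec q}}\|\chi_{B}\|_{L^{\vec q\,'}}\thickapprox|B|$, then dyadic telescoping --- is the standard route. The paper omits the proof of Proposition~\ref{pro2.1} (deferring to Lemma~2.5 of \cite{YT}), but its inline computation of $|b_{2^{k+1}B}-b_{B}|$ inside the proof of Theorem~\ref{th3.2} is exactly this telescoping, so there is no methodological divergence. Your reduction to the scalar quantity $|b_{B(0,r_1)}-b_{B(0,r_2)}|/|B(0,r_1)|^{\lambda}$ and your one-scale estimate are both correct.

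The gap is in the final summation. From the telescoping you correctly arrive at $\sum_{k=1}^{N}2^{kn\lambda}$ (for $r_1\le r_2$, $N\thickapprox 1+|\ln(r_1/r_2)|$), but the claim that this is $\lesssim N$ ``since $\lambda<\frac1n$'' is false when $0<\lambda<\frac1n$: there the geometric series is increasing and comparable to $2^{Nn\lambda}\thickapprox(r_2/r_1)^{n\lambda}$, which is polynomial, not logarithmic, in $r_2/r_1$; the hypothesis $\lambda<\frac1n$ does nothing to tame it. Symmetrically, the case $r_1\ge r_2$ is not ``the same telescoping read in the opposite direction'': there the relevant sum is $\sum_{k}|B(0,2^{k}r_2)|^{\lambda}/|B(0,r_1)|^{\lambda}$, which is $\lesssim N$ for $\lambda\ge0$ but grows polynomially for $\lambda<0$. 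The argument closes only when the sign of $\lambda$ and the ordering of $r_1,r_2$ cooperate so that $|B(0,\rho)|^{\lambda}\lesssim|B(0,r_1)|^{\lambda}$ for every intermediate radius $\rho$. This is not merely a defect of your write-up: for $0<\lambda<\frac1n$ the function $b(x)=|x|^{n\lambda}$ lies in $CBMO^{\vec q,\lambda}(\mathbb{R}^{n})$ with norm $\thickapprox1$ and satisfies $b_{B(0,r)}=c\,r^{n\lambda}$, so $|b_{B(0,r_1)}-b_{B(0,r_2)}|/|B(0,r_1)|^{\lambda}\thickapprox(r_2/r_1)^{n\lambda}$ when $r_2\gg r_1$, and the stated inequality fails in that regime. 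The provable version carries $|B(0,\max(r_1,r_2))|^{\lambda}$ in place of $|B(0,r_1)|^{\lambda}$ when $\lambda\ge 0$; that is the form the paper actually invokes (always with $r_1\ge r_2$ and $\lambda\ge0$, in the $J_4$ estimates of Theorem~\ref{th3.2} and Lemma~\ref{le4.2}), and your telescoping proves it verbatim. You should either restrict to that configuration or restate the conclusion with the maximum of the two radii.
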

The proof of Proposition \ref{pro2.1} is a slight modification of Lemma 2.5 in \cite{YT} and we omit the details here.

\section{Boundedness of the fractional integral operators}
The standard Calder\'{o}n-Zygmund operator $T$ is defined by
$$Tf(x)=p.v.\int_{\mathbb{R}^{n}}K(x-y)f(y)dy,$$
with the kernel $K$ satisfying the following size condition:
$$|K(x)|\leq C|x|^{-n}, x\neq0,$$
and some smoothness assumption. In fact, the Calder\'{o}n-Zygmund operator is a direct generalization of the Hilbert transform and the Riesz transform. The former is originated from researches of boundary value of conjugate harmonic functions on the upper half-plane, and the latter is tightly associated to the regularity of solution of second order elliptic equation, for more information, see the monograph \cite{SE}.

For $0<\alpha<n$, the fractional integral operator $T_{\alpha}$ is defined by
$$T_{\alpha}f(x)=\int_{\mathbb{R}^{n}}\frac{f(y)}{|x-y|^{n-\alpha}}dy.$$
It is closely related to the Laplacian operator of fractional degree. When $n>2$ and $\alpha=2, T_{\alpha}f$ is a solution of Poisson equation $-\triangle u=f$. The importance of fractional integral operators is owing to the fact that they are smooth operators and have been
extensively used in various areas such as potential analysis, harmonic analysis and partial differential equations.

In this section, we are devoted to establishing the boundedness of the operators $T$ and $T_\alpha$  on the mixed $\lambda$-central  Morrey spaces, respectively. However, the core techniques used in the proofs are similar, or one can repeats the ideas of the fractional integral operator $T_\alpha$ verbatim to prove the boundedness of the operators $T$.  Therefore, to unify the notation, we denote by $T_{\alpha} (0\leq\alpha<n)$ the fractional integral operator when $0<\alpha<n$, and the standard Calder\'{o}n-Zygmund operator when $\alpha=0$ (i.e. $T=T_{0})$.

For $0\leq\alpha<n$, let $b$ be a locally integrable function, the commutator $[b,T_{\alpha}]$ generalized by the operator $T_{\alpha}$ and the symbol $b$ is defined by
    $$[b,T_{\alpha}]f:=bT_{\alpha}f-T_{\alpha}(bf).$$

In \cite{BP}, Benedek and Panzone established the boundedness of the fractional integral operators on the mixed Lebesgue spaces. In 2019, Nogayama \cite{TN1} proved  the boundedness of the fractional integral operators  and the singular integral operators on the mixed Morrey spaces. Meanwhile, the boundedness of the commutator of the fractional integral operators was also obtained in \cite{TN2}. Recently, Wei \cite{WM1} extended the results to the mixed central Morrey spaces.

 Next we will prove the boundedness of the operator $T_\alpha$ and its commutator  $[b,T_{\alpha}]$ on the mixed $\lambda$-central  Morrey spaces.

\begin{theorem} \label{th3.1}
Let $0\leq\alpha<n,1<\vec{p},\vec{q}<\infty$ satisfy conditions $\vec{p}<\frac{n}{\alpha}$ and $\alpha=\sum^{n}_{i=1}\frac{1}{p_{i}}-\sum^{n}_{i=1}\frac{1}{q_{i}}$. If $\lambda_{1}<-\frac{\alpha}{n}$ and $\lambda=\lambda_{1}+\frac{\alpha}{n}$,
then
$$\|T_{\alpha}f\|_{\mathcal{B}^{\vec{q},\lambda}(\mathbb{R}^n)}\lesssim\|f\|_{\mathcal{B}^{\vec{p},\lambda_1}(\mathbb{R}^n)}.$$
\end{theorem}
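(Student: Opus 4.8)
The plan is to prove the estimate ball by ball, reducing everything to a single dyadic decomposition of $\mathbb{R}^n$ relative to a fixed ball $B(0,r)$. Fix $r>0$ and write $B=B(0,r)$. We split $f=f\chi_{2B}+\sum_{k=1}^{\infty}f\chi_{2^{k+1}B\setminus 2^{k}B}=:f_0+\sum_{k\ge1}f_k$, and estimate $\|(T_\alpha f_0)\chi_B\|_{L^{\vec q}}$ and $\|(T_\alpha f_k)\chi_B\|_{L^{\vec q}}$ separately. For the local term, we use the mixed-norm boundedness of $T_\alpha$ from $L^{\vec p}$ to $L^{\vec q}$ established by Benedek--Panzone (and sharpened by Nogayama) under the hypotheses $1<\vec p,\vec q<\infty$, $\vec p<n/\alpha$, and the scaling relation $\alpha=\sum 1/p_i-\sum 1/q_i$; this gives $\|(T_\alpha f_0)\chi_B\|_{L^{\vec q}}\lesssim\|f_0\|_{L^{\vec p}}\le\|f\chi_{2B}\|_{L^{\vec p}}$, which is controlled by $|2B|^{\lambda_1}\|\chi_{2B}\|_{L^{\vec p}}\,\|f\|_{\mathcal B^{\vec p,\lambda_1}}$.

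For the tail terms, the key point is that for $x\in B$ and $y\in 2^{k+1}B\setminus2^kB$ one has $|x-y|\approx 2^k r$, so $|T_\alpha f_k(x)|\lesssim (2^kr)^{\alpha-n}\int_{2^{k+1}B}|f(y)|\,dy$. Applying the mixed-norm Hölder inequality gives $\int_{2^{k+1}B}|f|\lesssim \|f\chi_{2^{k+1}B}\|_{L^{\vec p}}\,\|\chi_{2^{k+1}B}\|_{L^{\vec p'}}$, and then taking the $L^{\vec q}$ norm over $B$ contributes a factor $\|\chi_B\|_{L^{\vec q}}$. Collecting the powers of $r$: the computation of $\|\chi_{B(0,\rho)}\|_{L^{\vec q}}$ and $\|\chi_{B(0,\rho)}\|_{L^{\vec p'}}$ in terms of $\rho$ (each is a constant times a fixed power of $\rho$ determined by $\sum 1/q_i$, resp.\ $\sum 1/p_i'$), together with $(2^kr)^{\alpha-n}$ and $|2^{k+1}B|^{\lambda_1}$ from the definition of the $\mathcal B^{\vec p,\lambda_1}$ norm, should be organized so that the $r$-dependence matches $|B|^{\lambda}\|\chi_B\|_{L^{\vec q}}$ exactly — this is where the hypotheses $\alpha=\sum1/p_i-\sum1/q_i$ and $\lambda=\lambda_1+\alpha/n$ get used — leaving a geometric sum $\sum_{k\ge1} 2^{k n(\lambda_1+\alpha/n)}=\sum_{k\ge1}2^{kn\lambda}$ in the scalar coefficient. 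Since $\lambda_1<-\alpha/n$ forces $\lambda<0$, this series converges, and we obtain $\|(T_\alpha f)\chi_B\|_{L^{\vec q}}\lesssim |B|^{\lambda}\|\chi_B\|_{L^{\vec q}}\,\|f\|_{\mathcal B^{\vec p,\lambda_1}}$; dividing and taking the supremum over $r>0$ finishes the proof.

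A few technical points to handle carefully. First, one must verify that $T_\alpha f(x)$ is well defined (the integral converges absolutely) for $f\in\mathcal B^{\vec p,\lambda_1}$; this follows from the same dyadic estimate, since the tail sum converges. Second, for the case $\alpha=0$ (the Calderón--Zygmund operator), the local term uses the mixed-norm $L^{\vec p}$-boundedness of $T$ instead, and in the tail one replaces the size bound on $|x-y|^{\alpha-n}$ by the kernel size bound $|K(x-y)|\lesssim|x-y|^{-n}$; the scaling relation degenerates to $\vec p=\vec q$ and $\lambda=\lambda_1$, and the argument goes through verbatim. Third, the exact constants in $\|\chi_{B(0,\rho)}\|_{L^{\vec q}}\approx \rho^{\sum 1/q_i}$ and the analogous identity for $\vec p'$ should be recorded once as a lemma (they follow by iterated integration).

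The main obstacle I anticipate is bookkeeping rather than conceptual: making sure that all the powers of $2^k r$ — coming from the off-diagonal kernel estimate, the two applications of the mixed Hölder inequality, the characteristic-function norms, and the weight $|2^{k+1}B|^{\lambda_1}$ — combine to produce precisely $|B|^{\lambda}\|\chi_B\|_{L^{\vec q}}$ with only a summable geometric factor in $k$ left over. Getting the exponent arithmetic to close is exactly what pins down the relations $\alpha=\sum 1/p_i-\sum 1/q_i$ and $\lambda=\lambda_1+\alpha/n$, and the sign condition $\lambda_1<-\alpha/n$ is what guarantees $\sum_k 2^{kn\lambda}<\infty$.
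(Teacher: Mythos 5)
Your proposal is correct and follows essentially the same route as the paper: the splitting $f=f\chi_{2B}+f\chi_{(2B)^c}$, the mixed-norm $(L^{\vec p},L^{\vec q})$-boundedness of $T_\alpha$ for the local part, the dyadic annular decomposition with $|x-y|\approx 2^kr$ plus mixed H\"older for the tail, and the convergence of $\sum_k 2^{kn\lambda}$ from $\lambda_1<-\alpha/n$. The exponent bookkeeping you flag as the main task does close exactly as you predict, using $\|\chi_{B(0,\rho)}\|_{L^{\vec p'}}\|\chi_{B(0,\rho)}\|_{L^{\vec p}}\lesssim|B(0,\rho)|$ and the relations $\alpha=\sum_i 1/p_i-\sum_i 1/q_i$ and $\lambda=\lambda_1+\alpha/n$.
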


\begin{proof}[Proof]
Let $f$ be a function in $\mathcal{B}^{\vec{p},\lambda_1}(\mathbb{R}^n)$. For any fixed $r>0$, denote $B(0,r)$ by $B$. It suffices to show that the fact
$$\|T_{\alpha}f\chi_{B(0,r)}\|_{L^{\vec{q}}(\mathbb{R}^n)}\lesssim|B|^{\lambda}\|\chi_{B(0,r)}\|_{L^{\vec{q}}(\mathbb{R}^n)}
\|f\|_{\mathcal{B}^{\vec{p},\lambda_1}(\mathbb{R}^n)}$$
holds.

For any ball $B$, let $2B=B(0,2r)$. We write $f$ as $f=f_{1}+f_{2}$, where $f_{1}=f\chi_{2B}$ and $f_{2}=f\chi_{(2B)^{c}}$, via the Minkowski inequality, we have
\begin{align*}
\|T_{\alpha}f\chi_{B(0,r)}\|_{L^{\vec{q}}(\mathbb{R}^n)}&\leq\|T_{\alpha}f_{1}\chi_{B(0,r)}\|_{L^{\vec{q}}(\mathbb{R}^n)}
+\|T_{\alpha}f_{2}\chi_{B(0,r)}\|_{L^{\vec{q}}(\mathbb{R}^n)}\\
&=:I_{1}+I_{2}.
\end{align*}
We first estimate $I_{1}$. Note that $f_{1}\in L^{\vec{q}}(\mathbb{R}^n)$ and $T_{\alpha}$ is bounded from $L^{\vec{p}}(\mathbb{R}^n)$ to $L^{\vec{q}}(\mathbb{R}^n)$, we get
\begin{align*}
\|T_{\alpha}f_{1}\chi_{B(0,r)}\|_{L^{\vec{q}}(\mathbb{R}^n)}&\leq\|T_{\alpha}f_{1}\|_{L^{\vec{q}}(\mathbb{R}^n)}
\lesssim\|f_{1}\|_{L^{\vec{p}}(\mathbb{R}^n)}\\
&=\|f\chi_{B(0,2r)}\|_{L^{\vec{p}}(\mathbb{R}^n)}.
\end{align*}
Thus, the condition $\lambda=\lambda_{1}+\frac{\alpha}{n}$ deduces that
\begin{align*}
&\|T_{\alpha}f_{1}\chi_{B(0,r)}\|_{L^{\vec{q}}(\mathbb{R}^n)}\\
&\lesssim|B(0,2r)|^{\lambda_{1}}\|\chi_{B(0,2r)}\|_{L^{\vec{p}}(\mathbb{R}^n)}
\|f\|_{\mathcal{B}^{\vec{p},\lambda_1}(\mathbb{R}^n)}\\
&=|B|^{\lambda}\|\chi_{B(0,r)}\|_{L^{\vec{q}}(\mathbb{R}^n)}\frac{|B(0,2r)|^{\lambda_{1}}}{|B|^{\lambda}}
\frac{\|\chi_{B(0,2r)}\|_{L^{\vec{p}}(\mathbb{R}^n)}}{\|\chi_{B(0,r)}\|_{L^{\vec{q}}(\mathbb{R}^n)}}\|f\|_{\mathcal{B}^{\vec{p},\lambda_1}(\mathbb{R}^n)}\\
&\lesssim|B|^{\lambda}\|\chi_{B(0,r)}\|_{L^{\vec{q}}(\mathbb{R}^n)}\|f\|_{\mathcal{B}^{\vec{p},\lambda_1}(\mathbb{R}^n)}.
\end{align*}

Now we turn to prove  $I_{2}$. For $x\in B$ and $y\in2^{k+1}B\backslash2^{k}B$, we have $|x-y|\lesssim|2^{k}B|^{1/n}$, together with the H\"{o}lder inequality, we obtain

\begin{align*}
|T_{\alpha}f_{2}(x)|&=|T_{\alpha}(f\chi_{(2B)^{c}})(x)|\\
&\leq\sum\limits_{k=1}^{\infty}\int_{2^{k+1}B\backslash2^{k}B}\frac{|f(y)|}{|x-y|^{n-\alpha}}dy\\
&\lesssim\sum\limits_{k=1}^{\infty}|2^{k}B|^{-1+\frac{\alpha}{n}}\int_{2^{k+1}B}|f(y)|dy\\
&\lesssim\sum\limits_{k=1}^{\infty}|2^{k}B|^{-1+\frac{\alpha}{n}}\|\chi_{2^{k+1}B}\|_{L^{\vec{p}'}(\mathbb{R}^n)}
\|f\chi_{2^{k+1}B}\|_{L^{\vec{p}}(\mathbb{R}^n)}.
\end{align*}
Therefore,
\begin{align*}
&\|T_{\alpha}f_{2}\chi_{B(0,r)}\|_{L^{\vec{q}}(\mathbb{R}^n)}\\
&\lesssim\sum\limits_{k=1}^{\infty}|2^{k}B|^{-1+\frac{\alpha}{n}}
\|\chi_{2^{k+1}B}\|_{L^{\vec{p}'}(\mathbb{R}^n)}\|f\chi_{2^{k+1}B}\|_{L^{\vec{p}}(\mathbb{R}^n)}\|\chi_{B}\|_{L^{\vec{q}}(\mathbb{R}^n)}\\
&=\sum\limits_{k=1}^{\infty}|2^{k}B|^{-1+\frac{\alpha}{n}}|2^{k+1}B|^{\lambda_{1}}\|\chi_{2^{k+1}B}\|_{L^{\vec{p}'}(\mathbb{R}^n)}
\|\chi_{B}\|_{L^{\vec{q}}(\mathbb{R}^n)}\|\chi_{2^{k+1}B}\|_{L^{\vec{p}}(\mathbb{R}^n)}\|f\|_{\mathcal{B}^{\vec{p},\lambda_1}(\mathbb{R}^n)}\\
&\lesssim\sum\limits_{k=1}^{\infty}|2^{k}B|^{-1+\frac{\alpha}{n}}|2^{k+1}B|^{\lambda_{1}}|2^{k+1}B|\|\chi_{B}\|_{L^{\vec{q}}(\mathbb{R}^n)}
\|f\|_{\mathcal{B}^{\vec{p},\lambda_1}(\mathbb{R}^n)}\\
&=\sum\limits_{k=1}^{\infty}2^{kn(-1+\frac{\alpha}{n})}|B|^{(-1+\frac{\alpha}{n})}2^{kn\lambda_{1}}|B|^{\lambda_{1}}2^{kn}|B|
\|\chi_{B}\|_{L^{\vec{q}}(\mathbb{R}^n)}\|f\|_{\mathcal{B}^{\vec{p},\lambda_1}(\mathbb{R}^n)}\\
&=|B|^{\lambda}\|\chi_{B}\|_{L^{\vec{q}}(\mathbb{R}^n)}\|f\|_{\mathcal{B}^{\vec{p},\lambda_1}(\mathbb{R}^n)}
\sum\limits_{k=1}^{\infty}2^{kn(\frac{\alpha}{n}+\lambda_{1})}\\
&\lesssim|B|^{\lambda}\|\chi_{B}\|_{L^{\vec{q}}(\mathbb{R}^n)}\|f\|_{\mathcal{B}^{\vec{p},\lambda_1}(\mathbb{R}^n)},
\end{align*}
where we used the assumption $\lambda_{1}<-\frac{\alpha}{n}$ in the last inequality. The proof of Theorem \ref{th3.1} is completed.
\end{proof}

\begin{theorem} \label{th3.2}
Let $0\leq\alpha<n, 1<\vec{p}_{1}, \vec{p_{2}}, \vec{q}<\infty$ satisfy conditions $\vec{p_{1}}<\frac{n}{\alpha}, {\vec{p}_{1}}'<\vec{p}_{2}$ and $\alpha=\sum^{n}_{i=1}\frac{1}{p_{1i}}+\sum^{n}_{i=1}\frac{1}{p_{2i}}-\sum^{n}_{i=1}\frac{1}{q_{i}}$. Let $0\leq\lambda_{2}<\frac{1}{n},\lambda_{1}<-\lambda_{2}-\frac{\alpha}{n}$ and $\lambda=\lambda_{1}+\lambda_{2}+\frac{\alpha}{n}$.
If $b\in CBMO^{\vec{p}_{2},\lambda_{2}}(\mathbb{R}^{n})$,
then
$$\|[b,T_{\alpha}]f\|_{\mathcal{B}^{\vec{q},\lambda}(\mathbb{R}^n)}\lesssim\|b\|_{{CBMO}^{\vec{p}_{2},\lambda_{2}}(\mathbb{R}^n)}
\|f\|_{\mathcal{B}^{\vec{p}_{1},\lambda_{1}}(\mathbb{R}^n)}.$$
\end{theorem}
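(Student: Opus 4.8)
The plan is to follow the standard commutator scheme, running in parallel with the proof of Theorem \ref{th3.1}. Fix $r>0$, write $B=B(0,r)$ and $b_{B}=b_{B(0,r)}$; it suffices to prove
$$\|[b,T_{\alpha}]f\,\chi_{B}\|_{L^{\vec{q}}}\lesssim|B|^{\lambda}\|\chi_{B}\|_{L^{\vec{q}}}\|b\|_{CBMO^{\vec{p}_{2},\lambda_{2}}}\|f\|_{\mathcal{B}^{\vec{p}_{1},\lambda_{1}}}$$
for every $r$. Using the identity $[b,T_{\alpha}]f=(b-b_{B})T_{\alpha}f-T_{\alpha}((b-b_{B})f)$ together with the splitting $f=f_{1}+f_{2}$, $f_{1}=f\chi_{2B}$, $f_{2}=f\chi_{(2B)^{c}}$, one is reduced to estimating, in $L^{\vec{q}}(B)$, the four pieces $(b-b_{B})T_{\alpha}f_{1}$, $T_{\alpha}((b-b_{B})f_{1})$ (the local part) and $(b-b_{B})T_{\alpha}f_{2}$, $T_{\alpha}((b-b_{B})f_{2})$ (the global part).

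For the local part I would argue as in the estimate of $I_{1}$ in Theorem \ref{th3.1}. For $T_{\alpha}((b-b_{B})f_{1})$, drop $\chi_{B}$, use the boundedness of $T_{\alpha}$ from $L^{\vec{t}}$ to $L^{\vec{q}}$ with $\frac1{t_{i}}=\frac1{p_{1i}}+\frac1{p_{2i}}$ (so $t_{i}>1$ thanks to ${\vec{p}_{1}}'<\vec{p}_{2}$, and $\alpha=\sum\frac1{t_{i}}-\sum\frac1{q_{i}}$ by hypothesis), and split $\|(b-b_{B})f\chi_{2B}\|_{L^{\vec{t}}}\le\|(b-b_{B})\chi_{2B}\|_{L^{\vec{p}_{2}}}\|f\chi_{2B}\|_{L^{\vec{p}_{1}}}$ by the mixed-norm Hölder inequality. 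Proposition \ref{pro2.1} with $r_{1}=2r$, $r_{2}=r$ controls the first factor by $|2B|^{\lambda_{2}}\|\chi_{2B}\|_{L^{\vec{p}_{2}}}\|b\|_{CBMO^{\vec{p}_{2},\lambda_{2}}}$ (the logarithmic factor being the harmless constant $1+\ln2$), while the definition of $\mathcal{B}^{\vec{p}_{1},\lambda_{1}}$ bounds the second by $|2B|^{\lambda_{1}}\|\chi_{2B}\|_{L^{\vec{p}_{1}}}\|f\|_{\mathcal{B}^{\vec{p}_{1},\lambda_{1}}}$. Since $\|\chi_{B(0,\rho)}\|_{L^{\vec{s}}}\approx|B(0,\rho)|^{\frac1n\sum1/s_{i}}$, the product of these quantities collapses, using $\alpha=\sum\frac1{p_{1i}}+\sum\frac1{p_{2i}}-\sum\frac1{q_{i}}$, to a constant times $|B|^{\lambda_{1}+\lambda_{2}+\alpha/n}\|\chi_{B}\|_{L^{\vec{q}}}=|B|^{\lambda}\|\chi_{B}\|_{L^{\vec{q}}}$, which is exactly what is wanted because $\lambda=\lambda_{1}+\lambda_{2}+\frac{\alpha}{n}$. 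The piece $(b-b_{B})T_{\alpha}f_{1}$ is treated the same way: mixed Hölder gives $\|(b-b_{B})T_{\alpha}f_{1}\chi_{B}\|_{L^{\vec{q}}}\le\|(b-b_{B})\chi_{B}\|_{L^{\vec{p}_{2}}}\|T_{\alpha}f_{1}\|_{L^{\vec{r}}}$ with $\frac1{r_{i}}=\frac1{q_{i}}-\frac1{p_{2i}}$, then $T_{\alpha}\colon L^{\vec{p}_{1}}\to L^{\vec{r}}$ and the identical bookkeeping.

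For the global part the key geometric fact, exactly as in Theorem \ref{th3.1}, is that $x\in B$ and $y\in 2^{k+1}B\setminus2^{k}B$ force $|x-y|\approx|2^{k}B|^{1/n}$. Hence $|T_{\alpha}((b-b_{B})f_{2})(x)|\lesssim\sum_{k\ge1}|2^{k}B|^{-1+\alpha/n}\int_{2^{k+1}B}|b(y)-b_{B}|\,|f(y)|\,dy$, and on each annulus the three-function mixed Hölder inequality with exponents $\vec{p}_{2},\vec{p}_{1},\vec{u}$ (where $\frac1{u_{i}}=1-\frac1{p_{1i}}-\frac1{p_{2i}}>0$, again from ${\vec{p}_{1}}'<\vec{p}_{2}$) bounds the integral by $\|(b-b_{B})\chi_{2^{k+1}B}\|_{L^{\vec{p}_{2}}}\|f\chi_{2^{k+1}B}\|_{L^{\vec{p}_{1}}}\|\chi_{2^{k+1}B}\|_{L^{\vec{u}}}$. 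Proposition \ref{pro2.1} applied on $2^{k+1}B$ against $B$ bounds the first factor by a multiple of $(k+1)|2^{k+1}B|^{\lambda_{2}}\|\chi_{2^{k+1}B}\|_{L^{\vec{p}_{2}}}\|b\|_{CBMO^{\vec{p}_{2},\lambda_{2}}}$ — the crucial point being that it costs only a factor $\approx k+1$ — the $\mathcal{B}^{\vec{p}_{1},\lambda_{1}}$-norm handles the second, and $\|\chi_{2^{k+1}B}\|_{L^{\vec{u}}}\approx|2^{k+1}B|^{1-\frac1n(\sum1/p_{1i}+\sum1/p_{2i})}$. Collecting the powers of $|2^{k}B|$, the $k$-th summand is $\approx(k+1)\,2^{kn(\alpha/n+\lambda_{1}+\lambda_{2})}\,|B|^{\lambda}\|b\|_{CBMO^{\vec{p}_{2},\lambda_{2}}}\|f\|_{\mathcal{B}^{\vec{p}_{1},\lambda_{1}}}$, and the series converges precisely because $\lambda_{1}<-\lambda_{2}-\frac{\alpha}{n}$; this yields a bound for $T_{\alpha}((b-b_{B})f_{2})$ on $B$ uniform in $x$, hence the required $L^{\vec{q}}(B)$ estimate. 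For $(b-b_{B})T_{\alpha}f_{2}$ I would first reproduce the estimate of $I_{2}$ in Theorem \ref{th3.1} to obtain $|T_{\alpha}f_{2}(x)|\lesssim|B|^{\lambda_{1}+\alpha/n}\|f\|_{\mathcal{B}^{\vec{p}_{1},\lambda_{1}}}\sum_{k\ge1}2^{kn(\lambda_{1}+\alpha/n)}\lesssim|B|^{\lambda-\lambda_{2}}\|f\|_{\mathcal{B}^{\vec{p}_{1},\lambda_{1}}}$ for $x\in B$ (the series converging because $\lambda_{1}<-\frac{\alpha}{n}$, which follows from $\lambda_{1}<-\lambda_{2}-\frac{\alpha}{n}$ and $\lambda_{2}\ge0$), then multiply by $|b-b_{B}|$ and use mixed Hölder with a complementary exponent $\vec{v}$ to conclude $\|(b-b_{B})T_{\alpha}f_{2}\chi_{B}\|_{L^{\vec{q}}}\lesssim|B|^{\lambda-\lambda_{2}}\|f\|_{\mathcal{B}^{\vec{p}_{1},\lambda_{1}}}\|(b-b_{B})\chi_{B}\|_{L^{\vec{p}_{2}}}\|\chi_{B}\|_{L^{\vec{v}}}$, which by the $CBMO^{\vec{p}_{2},\lambda_{2}}$ definition and the index identity is $\lesssim|B|^{\lambda}\|\chi_{B}\|_{L^{\vec{q}}}\|b\|_{CBMO^{\vec{p}_{2},\lambda_{2}}}\|f\|_{\mathcal{B}^{\vec{p}_{1},\lambda_{1}}}$.

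The main obstacle is the global term $T_{\alpha}((b-b_{B})f_{2})$: the average $b_{B}$ occurring there is taken over $B$ rather than over the far annulus $2^{k+1}B$, so the definition of $CBMO^{\vec{p}_{2},\lambda_{2}}$ cannot be applied there directly; Proposition \ref{pro2.1} is the indispensable substitute, at the price of a logarithmic factor $\approx k+1$ on the $k$-th annulus, and the argument closes only because the geometric gain $2^{kn(\alpha/n+\lambda_{1}+\lambda_{2})}$ dominates this linear loss — which is exactly why the hypothesis requires the strict inequality $\lambda_{1}<-\lambda_{2}-\frac{\alpha}{n}$ rather than mere equality $\lambda=\lambda_{1}+\lambda_{2}+\frac{\alpha}{n}$. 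A secondary, purely bookkeeping obstacle is verifying that the auxiliary exponents $\vec{t},\vec{r},\vec{u},\vec{v}$ are admissible mixed-Lebesgue exponents, which is precisely the role of the structural hypotheses $\vec{p}_{1}<\frac{n}{\alpha}$ and ${\vec{p}_{1}}'<\vec{p}_{2}$.
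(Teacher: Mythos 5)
Your proposal is correct and follows essentially the same route as the paper's proof: the same four-term decomposition $(b-b_B)T_\alpha f_1$, $T_\alpha((b-b_B)f_1)$, $(b-b_B)T_\alpha f_2$, $T_\alpha((b-b_B)f_2)$, the same auxiliary H\"older exponents (your $\vec t,\vec r,\vec u$ are the paper's $\vec l,\vec t,\vec s$), the same pointwise annuli estimate for the global terms, and the same $(k+1)\,2^{kn(\lambda_1+\lambda_2+\alpha/n)}$ summability argument. The only cosmetic difference is that you invoke Proposition \ref{pro2.1} for the bound $|b_{2^{k+1}B}-b_B|\lesssim(k+1)|2^{k+1}B|^{\lambda_2}\|b\|_{CBMO^{\vec p_2,\lambda_2}}$, whereas the paper derives it inline by a telescoping sum.
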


\begin{proof}[Proof]
Let $f$ be a function in $\mathcal{B}^{\vec{p},\lambda_1}(\mathbb{R}^n)$. For any fixed $r>0$, denote $B(0,r)$ by $B$. We need to check that the fact
$$\|[b,T_{\alpha}]f\chi_{B}\|_{L^{\vec{q}}(\mathbb{R}^n)}\lesssim|B|^{\lambda}\|\chi_{B}\|_{L^{\vec{q}}(\mathbb{R}^n)}
\|b\|_{{CBMO}^{\vec{p}_{2},\lambda_{2}}(\mathbb{R}^n)}\|f\|_{\mathcal{B}^{\vec{p}_{1},\lambda_{1}}(\mathbb{R}^n)}$$
holds.
The Minkowski inequality gives that
\begin{align*}
&\|[b,T_{\alpha}]f\chi_{B}\|_{L^{\vec{q}}(\mathbb{R}^n)}\\
&\leq\|(b(x)-b_B)T_{\alpha}(f_1)\chi_{B}\|_{L^{\vec{q}}(\mathbb{R}^n)}+\|T_{\alpha}((b-b_B)f_1)\chi_{B}\|_{L^{\vec{q}}(\mathbb{R}^n)}\\
&\quad+\|(b(x)-b_B)T_{\alpha}(f_2)\chi_{B}\|_{L^{\vec{q}}(\mathbb{R}^n)}+\|T_{\alpha}((b-b_B)f_2)\chi_{B}\|_{L^{\vec{q}}(\mathbb{R}^n)}\\
&=:J_1+J_2+J_3+J_4.
\end{align*}

 We first estimate $J_1$. We may choose $\vec{t}$ such that $\frac{1}{\vec{q}}=\frac{1}{\vec{p_{2}}}+\frac{1}{\vec{t}}$, then $\sum^{n}_{i=1}\frac{1}{t_{i}}=\sum^{n}_{i=1}\frac{1}{p_{1i}}-\alpha$. Noticing that $\vec{p_{1}}<\frac{n}{\alpha}$, by H\"{o}lder's inequality and the $(L^{\vec{p_{1}}}(\mathbb{R}^{n}),L^{\vec{t}}(\mathbb{R}^{n}))$-boundedness of $T_{\alpha}$, we get
\begin{align*}
J_1&=\|(b(x)-b_B)T_{\alpha}(f_1)\chi_{B}\|_{L^{\vec{q}}(\mathbb{R}^n)}\\
&\leq\|(b(x)-b_B)\chi_{B}\|_{L^{\vec{p}_{2}}(\mathbb{R}^n)}\|T_{\alpha}(f_1)\|_{L^{\vec{t}}(\mathbb{R}^n)}\\
&\lesssim \|b\|_{{CBMO}^{\vec{p}_{2},\lambda_{2}}(\mathbb{R}^n)}|B|^{\lambda_{2}}\|\chi_{B}\|_{L^{\vec{p}_{2}}(\mathbb{R}^n)}
\|f\chi_{2B}\|_{L^{\vec{p}_{1}}(\mathbb{R}^n)}\\
&\lesssim \|b\|_{{CBMO}^{\vec{p}_{2},\lambda_{2}}(\mathbb{R}^n)}|B|^{\lambda_{2}}\|\chi_{B}\|_{L^{\vec{p}_{2}}(\mathbb{R}^n)}
\|f\|_{\mathcal{B}^{\vec{p}_{1},\lambda_{1}}(\mathbb{R}^n)}|2B|^{\lambda_{1}}\|\chi_{2B}\|_{L^{\vec{p}_{1}}(\mathbb{R}^n)}\\
&\lesssim \|b\|_{{CBMO}^{\vec{p}_{2},\lambda_{2}}(\mathbb{R}^n)}\|f\|_{\mathcal{B}^{\vec{p}_{1},\lambda_{1}}(\mathbb{R}^n)}
|B|^{\lambda_{1}+\lambda_{2}}\|\chi_{B}\|_{L^{\vec{p}_{2}}(\mathbb{R}^n)}\|\chi_{B}\|_{L^{\vec{p}_{1}}(\mathbb{R}^n)}\\
&\lesssim|B|^{\lambda}\|\chi_{B}\|_{L^{\vec{q}}(\mathbb{R}^n)}\|b\|_{{CBMO}^{\vec{p}_{2},\lambda_{2}}(\mathbb{R}^n)}
\|f\|_{\mathcal{B}^{\vec{p}_{1},\lambda_{1}}(\mathbb{R}^n)}.
\end{align*}
Next we estimate $J_2$. Choose $\vec{l}$ such that $\frac{1}{\vec{l}}=\frac{1}{\vec{p_{1}}}+\frac{1}{\vec{p_{2}}}$, then $\alpha=\sum^{n}_{i=1}\frac{1}{l_{i}}-\sum^{n}_{i=1}\frac{1}{q_{i}}$. Since $1<\vec{p_{1}}<\frac{n}{\alpha}$ and $\vec{p_{1}}'<\vec{p_{2}}<\infty$, we obtain $1<\vec{l}<\frac{n}{\alpha}$. Using H\"{o}lder's inequality and the
$(L^{\vec{l}}(\mathbb{R}^{n}),L^{\vec{q}}(\mathbb{R}^{n}))$-boundedness of $T_{\alpha}$, we have
\begin{align*}
J_2&=\|T_{\alpha}((b-b_B)f_1)\chi_{B}\|_{L^{\vec{q}}(\mathbb{R}^n)}\\
&\lesssim\|(b(x)-b_B)(f\chi_{2B})\|_{L^{\vec{l}}(\mathbb{R}^n)}\\
&\lesssim\|(b(x)-b_B)\chi_{2B}\|_{L^{\vec{p}_{2}}(\mathbb{R}^n)}\|f\chi_{2B}\|_{L^{\vec{p}_{1}}(\mathbb{R}^n)}\\
&\lesssim\big(\|(b(x)-b_{2B})\chi_{2B}\|_{L^{\vec{p}_{2}}(\mathbb{R}^n)}+|b_{2B}-b_B|\|\chi_{2B}\|_{L^{\vec{p}_{2}}(\mathbb{R}^n)}\big)
\|f\chi_{2B}\|_{L^{\vec{p}_{1}}(\mathbb{R}^n)}\\
&\lesssim\|(b(x)-b_{2B})\chi_{2B}\|_{L^{\vec{p}_{2}}(\mathbb{R}^n)}\|f\chi_{2B}\|_{L^{\vec{p}_{1}}(\mathbb{R}^n)}\\
&\lesssim\|b\|_{{CBMO}^{\vec{p}_{2},\lambda_{2}}(\mathbb{R}^n)}|2B|^{\lambda_{2}}\|\chi_{2B}\|_{L^{\vec{p}_{2}}(\mathbb{R}^n)}
\|f\chi_{2B}\|_{L^{\vec{p}_{1}}(\mathbb{R}^n)}\\
&\lesssim \|b\|_{{CBMO}^{\vec{p}_{2},\lambda_{2}}(\mathbb{R}^n)}|2B|^{\lambda_{2}}\|\chi_{2B}\|_{L^{\vec{p}_{2}}(\mathbb{R}^n)}
\|f\|_{\mathcal{B}^{\vec{p}_{1},\lambda_{1}}(\mathbb{R}^n)}|2B|^{\lambda_{1}}\|\chi_{2B}\|_{L^{\vec{p}_{1}}(\mathbb{R}^n)}\\
&\lesssim \|b\|_{{CBMO}^{\vec{p}_{2},\lambda_{2}}(\mathbb{R}^n)}\|f\|_{\mathcal{B}^{\vec{p}_{1},\lambda_{1}}(\mathbb{R}^n)}
|B|^{\lambda_{1}+\lambda_{2}}\|\chi_{B}\|_{L^{\vec{p}_{2}}(\mathbb{R}^n)}\|\chi_{B}\|_{L^{\vec{p}_{1}}(\mathbb{R}^n)}\\
&\leq|B|^{\lambda}\|\chi_{B}\|_{L^{\vec{q}}(\mathbb{R}^n)}\|b\|_{{CBMO}^{\vec{p}_{2},\lambda_{2}}(\mathbb{R}^n)}
\|f\|_{\mathcal{B}^{\vec{p}_{1},\lambda_{1}}(\mathbb{R}^n)}.
\end{align*}

For $J_3$, it follows from $\lambda_{1}<-\lambda_{2}-\frac{\alpha}{n}\leq-\frac{\alpha}{n}$ that
\begin{align*}
&|T_{\alpha}(f\chi_{(2B)^{c}})(x)|\\
&\lesssim\sum\limits_{k=1}^{\infty}|2^{k}B|^{-1+\frac{\alpha}{n}}\|\chi_{2^{k+1}B}\|_{L^{\vec{p_{1}}'}(\mathbb{R}^n)}
\|f\chi_{2^{k+1}B}\|_{L^{\vec{p_{1}}}(\mathbb{R}^n)}\\
\end{align*}
\begin{align*}
&=\sum\limits_{k=1}^{\infty}|2^{k}B|^{-1+\frac{\alpha}{n}}\|\chi_{2^{k+1}B}\|_{L^{\vec{p_{1}}'}(\mathbb{R}^n)}
\|\chi_{2^{k+1}B}\|_{L^{\vec{p_{1}}}(\mathbb{R}^n)}|2^{k+1}B|^{\lambda_{1}}\|f\|_{\mathcal{B}^{\vec{p}_{1},\lambda_{1}}(\mathbb{R}^n)}\\
&\lesssim\sum\limits_{k=1}^{\infty}|2^{k}B|^{\lambda_{1}+\frac{\alpha}{n}}\|f\|_{\mathcal{B}^{\vec{p}_{1},\lambda_{1}}(\mathbb{R}^n)}\\
&\lesssim|B|^{\lambda_{1}+\frac{\alpha}{n}}\|f\|_{\mathcal{B}^{\vec{p}_{1},\lambda_{1}}(\mathbb{R}^n)}.
\end{align*}

Set $\frac{1}{\vec{q}}=\frac{1}{\vec{p_{2}}}+\frac{1}{\vec{t}}$, using H\"{o}lder's inequality, we derive
\begin{align*}
J_3&=\|(b(x)-b_B)T_{\alpha}(f_2)\chi_{B}\|_{L^{\vec{q}}(\mathbb{R}^n)}\\
&=\|(b(x)-b_B)T_{\alpha}(f\chi_{(2B)^{c}})\chi_{B}\|_{L^{\vec{q}}(\mathbb{R}^n)}\\
&\lesssim|B|^{\lambda_{1}+\frac{\alpha}{n}}\|f\|_{\mathcal{B}^{\vec{p}_{1},\lambda_{1}}(\mathbb{R}^n)}\|(b(x)-b_B)\chi_{B}\|_{L^{\vec{q}}(\mathbb{R}^n)}\\
&\lesssim|B|^{\lambda_{1}+\frac{\alpha}{n}}\|f\|_{\mathcal{B}^{\vec{p}_{1},\lambda_{1}}(\mathbb{R}^n)}
\|(b(x)-b_B)\chi_{B}\|_{L^{\vec{p_{2}}}(\mathbb{R}^n)}\|\chi_{B}\|_{L^{\vec{t}}(\mathbb{R}^n)}\\
&\lesssim|B|^{\lambda_{1}+\lambda_{2}+\frac{\alpha}{n}}\|b\|_{{CBMO}^{\vec{p}_{2},\lambda_{2}}(\mathbb{R}^n)}
\|f\|_{\mathcal{B}^{\vec{p}_{1},\lambda_{1}}(\mathbb{R}^n)}\|\chi_{B}\|_{L^{\vec{p}_{2}}(\mathbb{R}^n)}\|\chi_{B}\|_{L^{\vec{t}}(\mathbb{R}^n)}\\
&\lesssim|B|^{\lambda}\|\chi_{B}\|_{L^{\vec{q}}(\mathbb{R}^n)}\|b\|_{{CBMO}^{\vec{p}_{2},\lambda_{2}}(\mathbb{R}^n)}
\|f\|_{\mathcal{B}^{\vec{p}_{1},\lambda_{1}}(\mathbb{R}^n)}.
\end{align*}

To estimate $J_4$, take $\vec{s}$ such that $\frac{1}{\vec{p_{1}}'}=\frac{1}{\vec{p_{2}}}+\frac{1}{\vec{s}}$, noticing that $\lambda_{2}\geq0$
and $\lambda_{1}<-\lambda_{2}-\frac{\alpha}{n}$, by H\"{o}lder's inequality, we obtain that
\begin{align*}
&|T_{\alpha}((b-b_B)f\chi_{(2B)^{c}})(x)|\\
&\leq\sum\limits_{k=1}^{\infty}\int_{2^{k+1}B\backslash2^{k}B}\frac{|b(y)-b_{B}||f(y)|}{|x-y|^{n-\alpha}}dy\\
&\lesssim\sum\limits_{k=1}^{\infty}|2^{k}B|^{-1+\frac{\alpha}{n}}\int_{2^{k+1}B}|b(y)-b_{B}||f(y)|dy\\
&\lesssim\sum\limits_{k=1}^{\infty}|2^{k}B|^{-1+\frac{\alpha}{n}}\|(b-b_{B})\chi_{2^{k+1}B}\|_{L^{\vec{p_{1}}'}(\mathbb{R}^n)}
\|f\chi_{2^{k+1}B}\|_{L^{\vec{p_{1}}}(\mathbb{R}^n)}\\
&\lesssim\sum\limits_{k=1}^{\infty}|2^{k}B|^{-1+\frac{\alpha}{n}}\|(b-b_{B})\chi_{2^{k+1}B}\|_{L^{\vec{p_{2}}}(\mathbb{R}^n)}
\|\chi_{2^{k+1}B}\|_{L^{\vec{s}}(\mathbb{R}^n)}\\
&\quad\times\|f\|_{\mathcal{B}^{\vec{p}_{1},\lambda_{1}}(\mathbb{R}^n)}|2^{k+1}B|^{\lambda_{1}}
\|\chi_{2^{k+1}B}\|_{L^{\vec{p_{1}}}(\mathbb{R}^n)}\\
&\lesssim\sum\limits_{k=1}^{\infty}|2^{k}B|^{\lambda_{1}-1+\frac{\alpha}{n}}\big(\|(b(x)-b_{2^{k+1}B})\chi_{2^{k+1}B}\|_{L^{\vec{p}_{2}}(\mathbb{R}^n)}\\
&\quad+|b_{2^{k+1}B}-b_B|\|\chi_{2^{k+1}B}\|_{L^{\vec{p}_{2}}(\mathbb{R}^n)}\big)\\
&\quad\quad\times\|f\|_{\mathcal{B}^{\vec{p}_{1},\lambda_{1}}(\mathbb{R}^n)}\|\chi_{2^{k+1}B}\|_{L^{\vec{s}}(\mathbb{R}^n)}
\|\chi_{2^{k+1}B}\|_{L^{\vec{p_{1}}}(\mathbb{R}^n)}\\
&\lesssim\sum\limits_{k=1}^{\infty}k|2^{k}B|^{\lambda_{1}-1+\frac{\alpha}{n}}\|b\|_{{CBMO}^{\vec{p}_{2},\lambda_{2}}(\mathbb{R}^n)}
|2^{k+1}B|^{\lambda_{2}}\|\chi_{2^{k+1}B}\|_{L^{\vec{p}_{2}}(\mathbb{R}^n)}
\end{align*}
\begin{align*}
&\quad\times\|f\|_{\mathcal{B}^{\vec{p}_{1},\lambda_{1}}(\mathbb{R}^n)}\|\chi_{2^{k+1}B}\|_{L^{\vec{s}}(\mathbb{R}^n)}
\|\chi_{2^{k+1}B}\|_{L^{\vec{p_{1}}}(\mathbb{R}^n)}\\
&\leq|B|^{\lambda}\sum\limits_{k=1}^{\infty}k2^{kn\lambda}\|b\|_{{CBMO}^{\vec{p}_{2},\lambda_{2}}(\mathbb{R}^n)}
\|f\|_{\mathcal{B}^{\vec{p}_{1},\lambda_{1}}(\mathbb{R}^n)}\\
&\lesssim|B|^{\lambda}\|b\|_{{CBMO}^{\vec{p}_{2},\lambda_{2}}(\mathbb{R}^n)}\|f\|_{\mathcal{B}^{\vec{p}_{1},\lambda_{1}}(\mathbb{R}^n)},
\end{align*}
where
\begin{align*}
&|b_{2^{k+1}B}-b_{B}|\leq\sum\limits_{j=0}^{k}|b_{2^{j+1}B}-b_{2^{j}B}|\\
&\leq\sum\limits_{j=0}^{k}\frac{1}{|2^{j}B|}\int_{2^{j}B}|b(y)-b_{2^{j+1}B}|dy\\
&\lesssim\sum\limits_{j=0}^{k}\frac{1}{|2^{j}B|}\|(b-b_{2^{j+1}B})\chi_{2^{j+1}B}\|_{L^{\vec{p}_{2}}(\mathbb{R}^n)}
\|\chi_{2^{j+1}B}\|_{L^{\vec{p_{2}}'}(\mathbb{R}^n)}\\
&\lesssim\|b\|_{{CBMO}^{\vec{p}_{2},\lambda_{2}}(\mathbb{R}^n)}\sum\limits_{j=0}^{k}\frac{1}{|2^{j}B|}|2^{j+1}B|^{\lambda_{2}}
\|\chi_{2^{j+1}B}\|_{L^{\vec{p}_{2}}(\mathbb{R}^n)}\|\chi_{2^{j+1}B}\|_{L^{\vec{p_{2}}'}(\mathbb{R}^n)}\\
&\lesssim\|b\|_{{CBMO}^{\vec{p}_{2},\lambda_{2}}(\mathbb{R}^n)}\sum\limits_{j=0}^{k}|2^{j+1}B|^{\lambda_{2}}\\
&\lesssim\|b\|_{{CBMO}^{\vec{p}_{2},\lambda_{2}}(\mathbb{R}^n)}(k+1)|2^{k+1}B|^{\lambda_{2}}.
\end{align*}
Thus,
\begin{align*}
J_4&=\|T_{\alpha}((b-b_B)f_2)\chi_{B}\|_{L^{\vec{q}}(\mathbb{R}^n)}=\|T_{\alpha}((b-b_B)f\chi_{(2B)^{c}})\chi_{B}\|_{L^{\vec{q}}(\mathbb{R}^n)}\\
&\lesssim|B|^{\lambda}\|\chi_{B}\|_{L^{\vec{q}}(\mathbb{R}^n)}\|b\|_{{CBMO}^{\vec{p}_{2},\lambda_{2}}(\mathbb{R}^n)}
\|f\|_{\mathcal{B}^{\vec{p}_{1},\lambda_{1}}(\mathbb{R}^n)}.
\end{align*}
Combining all of the above estimates, we complete the proof of Theorem \ref{th3.2}.
\end{proof}
\section{Generalized mixed central Morrey spaces}

In fact, we can further extend the boundedness of  the operators considered by in Section 3 to the generalized mixed central Morrey spaces.  For  this to happen, we first recall the definition of the generalized mixed Morrey spaces $M^{\varphi}_{\vec{q}}(\mathbb{R}^n)$.

\quad\hspace{-22pt}{\bf Definition 4.1}(\cite{WM2}) {For $1\leq\vec{q}<\infty$ and $\varphi(x,r):\mathbb{R}^{n}\times(0.\infty)\rightarrow(0.\infty)$ is a Lebesgue measurable function, the  generalized mixed Morrey spaces $M^{\varphi}_{\vec{q}}(\mathbb{R}^n)$ are defined by
$$M^{\varphi}_{\vec{q}}(\mathbb{R}^n):=\{f\in\mathscr{M}(\mathbb{R}^{n}):\|f\|_{M^{\varphi}_{\vec{q}}(\mathbb{R}^n)}<\infty\},$$
where
$$\|f\|_{M^{\varphi}_{\vec{q}}(\mathbb{R}^n)}:=\sup\limits_{x\in\mathbb{R}^n, r>0}\varphi(x,r)^{-1}\|\chi_{B(x,r)}\|^{-1}_{L^{\vec{q}}(\mathbb{R}^{n})}
\|f\chi_{B(x,r)}\|_{L^{\vec{q}}(\mathbb{R}^{n})}.$$

Similarly, we also can give the definition of the generalized mixed central Morrey spaces as follows:

\quad\hspace{-22pt}{\bf Definition 4.2} {Let $\varphi(r)$ be a positive measurable function on $\mathbb{R}^+$, $\lambda\in\mathbb{R}$ and $1<\vec{q}<\infty$. The generalized mixed central Morrey spaces $\mathcal{B}^{\vec{q},\varphi}(\mathbb{R}^n)$ is defined by
$$\mathcal{B}^{\vec{q},\varphi}(\mathbb{R}^n)=\big\{f\in L_{loc}^q(\mathbb{R}^n): \|f\|_{\mathcal{B}^{\vec{q},\varphi}(\mathbb{R}^n)}<\infty\big\},$$
where
$$\|f\|_{\mathcal{B}^{\vec{q},\varphi}(\mathbb{R}^n)}:=\sup\limits_{r>0}\frac{1}{\varphi(r)}
\frac{\|f\chi_{B(0,r)}\|_{L^{\vec{q}}(\mathbb{R}^n)}}{\|\chi_{B(0,r)}\|_{L^{\vec{q}}(\mathbb{R}^n)}}.$$
}
\quad\hspace{-22pt}{\bf Remark 4.1} {\it Note that if we take $\varphi(r)=r^{n\lambda}$, then $\mathcal{B}^{\vec{q},\varphi}(\mathbb{R}^n)=\mathcal{B}^{\vec{q},\lambda}(\mathbb{R}^n)$.}

Now we extend Theorems \ref{th3.1}-\ref{th3.2} to the generalized mixed central Morrey space $\mathcal{B}^{\vec{q},\varphi}(\mathbb{R}^n)$.

\begin{theorem} \label{th4.1}
Let $0\leq\alpha<n,1<\vec{q}<\infty$ satisfy conditions $\vec{p}<\frac{n}{\alpha}$ and $\alpha=\sum^{n}_{i=1}\frac{1}{p_{i}}-\sum^{n}_{i=1}\frac{1}{q_{i}}$. If the pair $(\varphi_1,\varphi_2)$ satisfy the condition
$$\int_{r}^{\infty}\frac{ess\inf_{t<\tau<\infty}\varphi_1(\tau){\tau}^{\sum^{n}_{i=1}\frac{1}{p_{i}}}}{{t^{\sum^{n}_{i=1}\frac{1}{p_{i}}+1}}}dt\lesssim\varphi_2(r).$$
then the fractional integral operator $T_{\alpha}$ is bounded from $\mathcal{B}^{\vec{p},\varphi_{1}}(\mathbb{R}^n)$ to $\mathcal{B}^{\vec{q},\varphi_{2}}(\mathbb{R}^n)$. Moreover,
$$\|T_{\alpha}f\|_{\mathcal{B}^{\vec{q},\varphi_{2}}(\mathbb{R}^n)}\lesssim\|f\|_{\mathcal{B}^{\vec{p},\varphi_{1}}(\mathbb{R}^n)}.$$
\end{theorem}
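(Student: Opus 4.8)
The plan is to mimic the proof of Theorem \ref{th3.1}, replacing the power weights $|B|^{\lambda_1}$, $|B|^{\lambda}$ by the functions $\varphi_1,\varphi_2$ and using the integral condition on the pair $(\varphi_1,\varphi_2)$ in place of the simple geometric-series estimate. Fix $r>0$, write $B=B(0,r)$, $f=f_1+f_2$ with $f_1=f\chi_{2B}$, $f_2=f\chi_{(2B)^c}$, and it suffices to bound $\|T_\alpha f\chi_B\|_{L^{\vec q}}$ by $\varphi_2(r)\,\|\chi_B\|_{L^{\vec q}}\,\|f\|_{\mathcal B^{\vec p,\varphi_1}}$. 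The local term $I_1=\|T_\alpha f_1\chi_B\|_{L^{\vec q}}$ is handled exactly as before: by the $(L^{\vec p},L^{\vec q})$-boundedness of $T_\alpha$ we get $I_1\lesssim \|f\chi_{2B}\|_{L^{\vec p}}\lesssim \varphi_1(2r)\|\chi_{2B}\|_{L^{\vec p}}\|f\|_{\mathcal B^{\vec p,\varphi_1}}$, and since $\varphi_1$ is roughly increasing (or one invokes the assumed integral condition at scale $r$, which forces $\varphi_1(2r)\|\chi_{2B}\|_{L^{\vec p}}\lesssim\varphi_2(r)\|\chi_B\|_{L^{\vec q}}$ after using $\|\chi_{B(0,t)}\|_{L^{\vec q}}\approx t^{\sum 1/q_i}$ and $\sum 1/q_i=\sum 1/p_i-\alpha$) this is $\lesssim \varphi_2(r)\|\chi_B\|_{L^{\vec q}}\|f\|_{\mathcal B^{\vec p,\varphi_1}}$.

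For the far term $I_2=\|T_\alpha f_2\chi_B\|_{L^{\vec q}}$, I would reproduce the pointwise estimate from Theorem \ref{th3.1}: for $x\in B$,
\begin{align*}
|T_\alpha f_2(x)|
&\lesssim\sum_{k=1}^\infty|2^kB|^{-1+\frac\alpha n}\|\chi_{2^{k+1}B}\|_{L^{\vec p'}}\|f\chi_{2^{k+1}B}\|_{L^{\vec p}}\\
&\lesssim\sum_{k=1}^\infty|2^kB|^{-1+\frac\alpha n}\|\chi_{2^{k+1}B}\|_{L^{\vec p'}}\|\chi_{2^{k+1}B}\|_{L^{\vec p}}\,\varphi_1(2^{k+1}r)\,\|f\|_{\mathcal B^{\vec p,\varphi_1}}.
\end{align*}
Using $\|\chi_{2^{k+1}B}\|_{L^{\vec p'}}\|\chi_{2^{k+1}B}\|_{L^{\vec p}}\approx|2^{k+1}B|\approx (2^{k}r)^n$ and $|2^kB|^{-1+\alpha/n}\approx (2^kr)^{\alpha-n}$, the summand is $\approx (2^kr)^{\alpha}\varphi_1(2^{k+1}r)\|f\|_{\mathcal B^{\vec p,\varphi_1}}$. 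The key point is then to recognize the sum $\sum_{k\ge1}(2^kr)^{\alpha}\varphi_1(2^{k+1}r)$ as a discretization of the integral $\int_r^\infty \varphi_1(\tau)\tau^{\alpha}\,\frac{d\tau}{\tau}$, or more precisely of $\int_r^\infty \frac{\operatorname*{ess\,inf}_{t<\tau<\infty}\varphi_1(\tau)\,\tau^{\sum 1/p_i}}{t^{\sum 1/p_i+1}}\,dt$ after inserting $\tau^{-\sum 1/p_i}\tau^{\sum 1/p_i}$ and using $\alpha=\sum 1/p_i-\sum 1/q_i$; this last integral is $\lesssim\varphi_2(r)$ by hypothesis. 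Hence $|T_\alpha f_2(x)|\lesssim\varphi_2(r)\|f\|_{\mathcal B^{\vec p,\varphi_1}}$ uniformly in $x\in B$, and taking $L^{\vec q}$ norm over $B$ gives $I_2\lesssim\varphi_2(r)\|\chi_B\|_{L^{\vec q}}\|f\|_{\mathcal B^{\vec p,\varphi_1}}$, which combined with the bound for $I_1$ completes the proof.

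The main obstacle I anticipate is the bookkeeping that turns the dyadic sum into the stated integral: one must carefully insert the factor $t^{\sum 1/p_i}$, pull out $\operatorname*{ess\,inf}_{t<\tau<\infty}\varphi_1(\tau)$ as a lower bound valid on each dyadic annulus $t\in[2^kr,2^{k+1}r)$ for $\tau$ ranging over that annulus, and check that the exponent arithmetic $\alpha=\sum 1/p_i-\sum 1/q_i$ makes the powers of $2^kr$ match. A secondary subtlety is justifying, for the local piece $I_1$, that the single-scale estimate $\varphi_1(2r)\|\chi_{2B}\|_{L^{\vec p}}\lesssim\varphi_2(r)\|\chi_B\|_{L^{\vec q}}$ follows from the integral condition; this is obtained by restricting the integral $\int_r^\infty(\cdots)dt$ to $t\in(r,2r)$ and using that the essential infimum over $(t,\infty)$ dominates $\varphi_1$ at a comparable scale, a routine but necessary step. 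Everything else—Hölder's inequality, the $(L^{\vec p},L^{\vec q})$-boundedness of $T_\alpha$ from \cite{BP}, and the Minkowski splitting—is identical to the proof of Theorem \ref{th3.1}.
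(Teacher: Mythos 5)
Your splitting of $f$ and the treatment of the local/far pieces follow Theorem \ref{th3.1}, but the step you describe as ``bookkeeping'' --- bounding the dyadic sum $\sum_{k\ge 1}(2^kr)^{\alpha}\varphi_1(2^{k+1}r)$ by the hypothesis integral --- is not bookkeeping; it fails, and for a directional reason. Write $N=\sum_{i=1}^n\frac{1}{p_i}$ and $g(t)=\essi_{t<\tau<\infty}\varphi_1(\tau)\tau^{N}$. Being an essential \emph{infimum}, $g(t)$ only satisfies $g(t)\le\varphi_1(\tau)\tau^{N}$ for a.e.\ $\tau>t$; it gives no upper bound on $\varphi_1$ at any particular scale. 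The inequality you need, namely $\varphi_1(2^{k+1}r)(2^{k+1}r)^{N}\lesssim g(t)$ for $t$ in the $k$-th annulus, points the wrong way, so the sum is not a discretization of the integral. Concretely, if $\varphi_1(\tau)=\tau^{-N}$ off a sparse union of tiny intervals around the points $2^{j}$ on which $\varphi_1\equiv 1$, then $g\equiv 1$ and the hypothesis holds with $\varphi_2(r)=r^{-N}$, yet for $r=\tfrac12$ your sum is $\sum_k (2^{k-1})^{\alpha}=\infty$. The same error occurs in your treatment of $I_1$: restricting the hypothesis integral to $t\in(r,2r)$ gives $\varphi_2(r)\gtrsim r^{-N}g(r)$ and $g(r)\le\varphi_1(2r)(2r)^{N}$, which again does not yield $\varphi_1(2r)\lesssim\varphi_2(r)$.

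The missing idea is the monotonicity of $t\mapsto\|f\|_{L^{\vec{p}}(B(0,t))}$: since this is non-decreasing,
$$\|f\|_{L^{\vec{p}}(B(0,t))}\le\essi_{t<\tau<\infty}\|f\|_{L^{\vec{p}}(B(0,\tau))}\lesssim\|f\|_{\mathcal{B}^{\vec{p},\varphi_1}(\mathbb{R}^n)}\,\essi_{t<\tau<\infty}\varphi_1(\tau)\tau^{N},$$
and this is the only way the ess-inf enters with the correct orientation. To exploit it one must keep $\|f\|_{L^{\vec{p}}(B(0,t))}$ intact as long as possible, which is why the paper first proves the genuinely continuous estimate of Lemma \ref{le4.1}, $\|T_{\alpha}f\|_{L^{\vec{q}}(B(0,r))}\lesssim r^{\sum 1/q_i}\int_{2r}^{\infty}t^{-\sum 1/q_i-1}\|f\|_{L^{\vec{p}}(B(0,t))}\,dt$, obtained by writing $|y|^{\alpha-n}\approx\int_{|y|}^{\infty}t^{\alpha-n-1}dt$ and applying Fubini (the local piece is absorbed into the same integral precisely by monotonicity), and only then substitutes the display above and invokes the hypothesis, via the weighted Hardy-operator inequality of \cite{ML}. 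Your dyadic argument would prove the theorem under the stronger hypothesis $\int_{r}^{\infty}\varphi_1(t)\,t^{\alpha-1}\,dt\lesssim\varphi_2(r)$ (no essential infimum), but not under the condition as stated.
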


\begin{theorem} \label{th4.2}
Let $0\leq\alpha<n,1<\vec{p}_{1},\vec{p_{2}},\vec{q}<\infty$ satisfy conditions $\vec{p_{1}}<\frac{n}{\alpha}, {\vec{p}_{1}}'<\vec{p}_{2}$ and $\alpha=\sum^{n}_{i=1}\frac{1}{p_{1i}}+\sum^{n}_{i=1}\frac{1}{p_{2i}}-\sum^{n}_{i=1}\frac{1}{q_{i}}$. Let $0<\lambda<\frac{1}{n}$ and
$b\in CBMO^{\vec{p}_{2},\lambda}(\mathbb{R}^{n})$. If the pair $(\varphi_1,\varphi_2)$ satisfy the condition
$$\int_{r}^{\infty}t^{n\lambda}(1+\ln\frac{t}{r})
\frac{ess\inf_{t<\tau<\infty}\varphi_1(\tau){\tau}^{\sum^{n}_{i=1}\frac{1}{p_{1i}}+\alpha}}{{t^{\sum^{n}_{i=1}\frac{1}{p_{1i}}+1}}}dt\lesssim\varphi_2(r).$$
then the commutator $[b,T_{\alpha}]$ is bounded from $\mathcal{B}^{\vec{p}_{1},\varphi_{1}}(\mathbb{R}^n)$ to $\mathcal{B}^{\vec{q},\varphi_{2}}(\mathbb{R}^n)$. Moreover,
$$\|[b,T_{\alpha}]f\|_{\mathcal{B}^{\vec{q},\varphi_{2}}(\mathbb{R}^n)}\lesssim\|b\|_{CBMO^{\vec{p}_{2},\lambda}}\|f\|_{\mathcal{B}^{\vec{p}_{1},\varphi_{1}}(\mathbb{R}^n)}.$$
\end{theorem}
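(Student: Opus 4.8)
The plan is to transport the proof of Theorem~\ref{th3.2} to the generalized setting, the only structural change being that the power $|2^{j}B|^{\lambda_{1}}$, which there arises whenever one estimates $\|f\chi_{2^{j}B}\|_{L^{\vec{p}_{1}}}$ against the $\mathcal{B}^{\vec{p}_{1},\lambda_{1}}$ norm, is replaced by the gauge $\varphi_{1}(2^{j}r)$ coming from the $\mathcal{B}^{\vec{p}_{1},\varphi_{1}}$ norm, and that at the very end a genuine dyadic series (no longer a convergent geometric one) has to be summed against the integral in the hypothesis. Concretely, fix $r>0$, write $B=B(0,r)$, $2B=B(0,2r)$, and reduce the assertion to
$$\|[b,T_{\alpha}]f\chi_{B}\|_{L^{\vec{q}}(\mathbb{R}^{n})}\lesssim\varphi_{2}(r)\,\|\chi_{B}\|_{L^{\vec{q}}(\mathbb{R}^{n})}\,\|b\|_{CBMO^{\vec{p}_{2},\lambda}(\mathbb{R}^{n})}\,\|f\|_{\mathcal{B}^{\vec{p}_{1},\varphi_{1}}(\mathbb{R}^{n})}.$$
Split $f=f_{1}+f_{2}$ with $f_{1}=f\chi_{2B}$, $f_{2}=f\chi_{(2B)^{c}}$, and break $[b,T_{\alpha}]f\chi_{B}$ into $J_{1}+J_{2}+J_{3}+J_{4}$ exactly as in Theorem~\ref{th3.2}, using the same auxiliary exponents $\vec{t}$ $\big(\frac{1}{\vec{q}}=\frac{1}{\vec{p}_{2}}+\frac{1}{\vec{t}}\big)$, $\vec{l}$ $\big(\frac{1}{\vec{l}}=\frac{1}{\vec{p}_{1}}+\frac{1}{\vec{p}_{2}}\big)$, $\vec{s}$ $\big(\frac{1}{\vec{p}_{1}'}=\frac{1}{\vec{p}_{2}}+\frac{1}{\vec{s}}\big)$, and the same $(L^{\vec{p}_{1}},L^{\vec{t}})$- and $(L^{\vec{l}},L^{\vec{q}})$-boundedness of $T_{\alpha}$.

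For the local terms $J_{1},J_{2}$ I would run the H\"{o}lder estimates of Theorem~\ref{th3.2} verbatim, replacing $\|f\chi_{2B}\|_{L^{\vec{p}_{1}}}\lesssim|2B|^{\lambda_{1}}\|\chi_{2B}\|_{L^{\vec{p}_{1}}}\|f\|$ by $\|f\chi_{2B}\|_{L^{\vec{p}_{1}}}\lesssim\varphi_{1}(2r)\|\chi_{2B}\|_{L^{\vec{p}_{1}}}\|f\|_{\mathcal{B}^{\vec{p}_{1},\varphi_{1}}}$, using Proposition~\ref{pro2.1} to absorb the $b_{2B}-b_{B}$ correction in $J_{2}$ and to bound $\|(b-b_{B})\chi_{B}\|_{L^{\vec{p}_{2}}}\lesssim|B|^{\lambda}\|\chi_{B}\|_{L^{\vec{p}_{2}}}\|b\|_{CBMO^{\vec{p}_{2},\lambda}}$, together with $\sum_{i}\frac{1}{p_{1i}}+\sum_{i}\frac{1}{p_{2i}}=\sum_{i}\frac{1}{q_{i}}+\alpha$ and $\|\chi_{B(0,\rho)}\|_{L^{\vec{v}}}\thickapprox\rho^{\sum_{i}1/v_{i}}$. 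This yields $J_{1}+J_{2}\lesssim r^{n\lambda+\alpha}\varphi_{1}(2r)\|\chi_{B}\|_{L^{\vec{q}}}\|b\|\,\|f\|$, and finally $r^{n\lambda+\alpha}\varphi_{1}(2r)\lesssim\varphi_{2}(r)$ follows by restricting the integral in the hypothesis to $[r,2r]$, where $t^{n\lambda}\thickapprox r^{n\lambda}$, $1+\ln(t/r)\in[1,1+\ln 2]$ and $\essi_{t<\tau<\infty}\varphi_{1}(\tau)\tau^{\sum_{i}1/p_{1i}+\alpha}$ is comparable to $\varphi_{1}(t)t^{\sum_{i}1/p_{1i}+\alpha}$.

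For the tail terms $J_{3},J_{4}$ I would reuse the two pointwise bounds already established inside the proof of Theorem~\ref{th3.2}: for $x\in B$,
$$|T_{\alpha}f_{2}(x)|\lesssim\sum_{k\ge1}|2^{k}B|^{-1+\frac{\alpha}{n}}\|\chi_{2^{k+1}B}\|_{L^{\vec{p}_{1}'}}\|f\chi_{2^{k+1}B}\|_{L^{\vec{p}_{1}}}\lesssim\|f\|\sum_{k\ge1}(2^{k}r)^{\alpha}\varphi_{1}(2^{k+1}r),$$
and, after the splitting $\frac{1}{\vec{p}_{1}'}=\frac{1}{\vec{p}_{2}}+\frac{1}{\vec{s}}$, Proposition~\ref{pro2.1} applied to $\|(b-b_{B})\chi_{2^{k+1}B}\|_{L^{\vec{p}_{2}}}$ (which produces the factor $(1+k)\,|2^{k+1}B|^{\lambda}\|\chi_{2^{k+1}B}\|_{L^{\vec{p}_{2}}}\|b\|$) and the $\mathcal{B}^{\vec{p}_{1},\varphi_{1}}$ bound on $f$,
$$|T_{\alpha}((b-b_{B})f_{2})(x)|\lesssim\sum_{k\ge1}|2^{k}B|^{-1+\frac{\alpha}{n}}\|(b-b_{B})\chi_{2^{k+1}B}\|_{L^{\vec{p}_{1}'}}\|f\chi_{2^{k+1}B}\|_{L^{\vec{p}_{1}}}\lesssim\|b\|\,\|f\|\sum_{k\ge1}(1+k)(2^{k}r)^{\alpha+n\lambda}\varphi_{1}(2^{k+1}r).$$
Taking $L^{\vec{q}}(B)$ norms (and, for $J_{3}$, one further H\"{o}lder $\frac{1}{\vec{q}}=\frac{1}{\vec{p}_{2}}+\frac{1}{\vec{t}}$ with $\|(b-b_{B})\chi_{B}\|_{L^{\vec{p}_{2}}}\lesssim|B|^{\lambda}\|\chi_{B}\|_{L^{\vec{p}_{2}}}\|b\|$) gives $J_{3}\lesssim r^{n\lambda}\|\chi_{B}\|_{L^{\vec{q}}}\|b\|\,\|f\|\sum_{k\ge1}(2^{k}r)^{\alpha}\varphi_{1}(2^{k+1}r)$ and $J_{4}\lesssim\|\chi_{B}\|_{L^{\vec{q}}}\|b\|\,\|f\|\sum_{k\ge1}(1+k)(2^{k}r)^{\alpha+n\lambda}\varphi_{1}(2^{k+1}r)$. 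Since $r^{n\lambda}\le t^{n\lambda}$ and $1\le 1+\ln(t/r)$ for $t\ge r$ (here $\lambda>0$), both series are dominated by a sum of integrals over the annuli $[2^{k}r,2^{k+1}r]$, hence by
$$\sum_{k\ge1}(1+k)(2^{k}r)^{\alpha+n\lambda}\varphi_{1}(2^{k+1}r)\lesssim\int_{r}^{\infty}t^{n\lambda}\Big(1+\ln\frac{t}{r}\Big)\frac{\essi_{t<\tau<\infty}\varphi_{1}(\tau)\tau^{\sum_{i}1/p_{1i}+\alpha}}{t^{\sum_{i}1/p_{1i}+1}}\,dt\lesssim\varphi_{2}(r),$$
which is the hypothesis; combining $J_{1},\dots,J_{4}$ finishes the proof.

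The main obstacle is the last, discrete-to-continuous, step: on each dyadic annulus $[2^{k}r,2^{k+1}r]$ one must know that the integrand is $\gtrsim(1+k)(2^{k}r)^{\alpha+n\lambda-1}\varphi_{1}(2^{k+1}r)$, i.e. that $\essi_{t<\tau<\infty}[\varphi_{1}(\tau)\tau^{\sum_{i}1/p_{1i}+\alpha}]$ is comparable to the value of $\varphi_{1}(\tau)\tau^{\sum_{i}1/p_{1i}+\alpha}$ for $\tau$ near $t$. This is exactly why the essential infimum appears in the statement, and it requires a mild regularity hypothesis on $\varphi_{1}$ (for instance that $\tau\mapsto\varphi_{1}(\tau)\tau^{\sum_{i}1/p_{1i}+\alpha}$ be almost increasing); the same point is what makes the local terms $J_{1},J_{2}$ fit under the one integral bound. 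Everything else is the bookkeeping of Theorem~\ref{th3.2} carried over word for word. As a consistency check, Remark~4.1 with $\varphi_{1}(\rho)=\rho^{n\lambda_{1}}$ and $\varphi_{2}(\rho)=\rho^{n(\lambda_{1}+\lambda)+\alpha}$ collapses the hypothesis to $\lambda_{1}<-\lambda-\frac{\alpha}{n}$ and recovers Theorem~\ref{th3.2}.
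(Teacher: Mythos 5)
Your reduction to $J_{1},\dots,J_{4}$ and the individual annulus estimates are sound, but the final discrete-to-continuous step --- the one you yourself flag as the main obstacle --- is a genuine gap rather than a removable technicality. After the dyadic decomposition you need, with $M=\sum_{i}\frac{1}{p_{1i}}$,
$$\sum_{k\ge1}(1+k)(2^{k}r)^{\alpha+n\lambda}\varphi_{1}(2^{k+1}r)\lesssim\int_{r}^{\infty}t^{n\lambda}\Big(1+\ln\frac{t}{r}\Big)\frac{\operatorname{ess\,inf}_{t<\tau<\infty}\varphi_{1}(\tau)\tau^{M+\alpha}}{t^{M+1}}\,dt,$$
but for $t\in[2^{k}r,2^{k+1}r]$ one only has $\operatorname{ess\,inf}_{t<\tau<\infty}\varphi_{1}(\tau)\tau^{M+\alpha}\le\varphi_{1}(2^{k+1}r)(2^{k+1}r)^{M+\alpha}$, so the comparison you need on each annulus points the wrong way: for a general measurable $\varphi_{1}$ the value $\varphi_{1}(2^{k+1}r)$ produced by estimating $\|f\chi_{2^{k+1}B}\|_{L^{\vec{p}_{1}}}$ against the $\mathcal{B}^{\vec{p}_{1},\varphi_{1}}$ norm can be far larger than the essential infimum appearing in the integrand. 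Your proposed fix (assuming $\tau\mapsto\varphi_{1}(\tau)\tau^{M+\alpha}$ almost increasing) is an additional hypothesis not present in the statement, so as written your argument proves a strictly weaker theorem.

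The paper's proof is organized precisely to sidestep this. It first establishes a continuous local estimate (Lemma \ref{le4.2}) in which $\|f\|_{L^{\vec{p}_{1}}(B(0,t))}$ is kept \emph{under the integral sign}: the tail terms are treated not by dyadic annuli but by writing $|y|^{-(n-\alpha)}\lesssim\int_{|y|}^{\infty}t^{-(n-\alpha)-1}\,dt$ and applying Fubini, which produces $\int_{2r}^{\infty}t^{n\lambda}(1+\ln\frac{t}{r})\|f\|_{L^{\vec{p}_{1}}(B(0,t))}t^{-(M+1-\alpha)}\,dt$ directly. Division by $\varphi_{1}$ happens only at the very end, via the weighted Hardy-operator inequality of Carro, Pick, Soria and Stepanov \cite{ML} after the substitution $t=s^{-1/M}$: the sufficient condition for $\operatorname{ess\,sup}_{t}\omega(t)\mathcal{H}g(t)\lesssim\operatorname{ess\,sup}_{t}\nu(t)g(t)$ over non-increasing $g$ is exactly the integral condition with the essential infimum stated in the hypothesis. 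In other words, the $\operatorname{ess\,inf}$ is there to feed the Hardy inequality, not to be matched pointwise with $\varphi_{1}$ on each annulus. To repair your argument you would either have to import that Fubini-plus-Hardy mechanism or add the almost-monotonicity assumption, which changes the theorem.
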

The proofs of Theorems \ref{th4.1} and \ref{th4.2} are based on the following two key lemmas. For this purpose, we provide the details as follows.
\begin{lemma}\label{le4.1}
Let $0\leq\alpha<n,1<\vec{q}<\infty$ satisfy conditions $\vec{p}<\frac{n}{\alpha}$ and $\alpha=\sum^{n}_{i=1}\frac{1}{p_{i}}-\sum^{n}_{i=1}\frac{1}{q_{i}}$, then the inequality
$$\|T_{\alpha}(f)\|_{L^{\vec{q}}(B(0,r))}\lesssim r^{\sum^{n}_{i=1}\frac{1}{q_{i}}}\int_{2r}^{\infty}t^{-\sum^{n}_{i=1}\frac{1}{q_{i}}-1}\|f\|_{L^{\vec{p}}(B(0,t))}dt$$
holds for any ball $B(0,r)$ and for all $f\in L^{\vec{p}}_{loc}(\mathbb{R}^n)$.
\end{lemma}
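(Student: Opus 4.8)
The plan is to split the function $f$ at scale $r$ by writing $f = f\chi_{B(0,2r)} + f\chi_{(B(0,2r))^c}$, handle each piece separately, and then recombine. For the local part, I would invoke the known $(L^{\vec p},L^{\vec q})$-boundedness of $T_\alpha$ (Benedek--Panzone / Nogayama) to bound $\|T_\alpha(f\chi_{B(0,2r)})\|_{L^{\vec q}(B(0,r))} \lesssim \|f\|_{L^{\vec p}(B(0,2r))}$, and then observe that for $t \in [2r, 4r]$ one has $\|f\|_{L^{\vec p}(B(0,2r))} \le \|f\|_{L^{\vec p}(B(0,t))}$ and $r^{\sum 1/q_i} \approx t^{\sum 1/q_i}$, so that
$$\|f\|_{L^{\vec p}(B(0,2r))} \lesssim r^{\sum^n_{i=1}\frac{1}{q_i}}\int_{2r}^{4r} t^{-\sum^n_{i=1}\frac{1}{q_i}-1}\|f\|_{L^{\vec p}(B(0,t))}\,dt,$$
which is dominated by the right-hand side of the claimed inequality.

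For the global part, fix $x \in B(0,r)$. Then for $y \in (B(0,2r))^c$ we have $|x-y| \approx |y|$, so
$$|T_\alpha(f\chi_{(B(0,2r))^c})(x)| \lesssim \int_{|y|\ge 2r}\frac{|f(y)|}{|y|^{n-\alpha}}\,dy.$$
I would write $|y|^{-(n-\alpha)} \approx \int_{|y|}^{\infty} t^{\alpha - n - 1}\,dt$ (up to a constant depending on $n,\alpha$), substitute, and apply Fubini to get
$$|T_\alpha(f\chi_{(B(0,2r))^c})(x)| \lesssim \int_{2r}^{\infty} t^{\alpha-n-1}\int_{|y|\le t}|f(y)|\,dy\,dt.$$
The inner integral is handled by Hölder's inequality in the mixed-norm setting: $\int_{B(0,t)}|f| \le \|f\chi_{B(0,t)}\|_{L^{\vec p}}\,\|\chi_{B(0,t)}\|_{L^{\vec p'}} \approx t^{n - \sum 1/p_i}\|f\|_{L^{\vec p}(B(0,t))}$. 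Combined with $\alpha = \sum 1/p_i - \sum 1/q_i$, the powers of $t$ collapse to $t^{-\sum 1/q_i - 1}$, giving a pointwise bound for $x \in B(0,r)$ of the form $C\int_{2r}^\infty t^{-\sum 1/q_i - 1}\|f\|_{L^{\vec p}(B(0,t))}\,dt$, which is a constant in $x$. Taking the $L^{\vec q}(B(0,r))$ norm multiplies it by $\|\chi_{B(0,r)}\|_{L^{\vec q}} \approx r^{\sum 1/q_i}$, yielding exactly the desired estimate.

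The main obstacle I anticipate is bookkeeping the mixed-norm Hölder inequality and the evaluation $\|\chi_{B(0,t)}\|_{L^{\vec p}} \approx t^{\sum 1/p_i}$ cleanly — in the mixed-norm setting one must be careful that $\|\chi_{B(0,t)}\|_{L^{\vec p}(\mathbb R^n)}$ behaves like a power of $t$ with exponent $\sum_{i=1}^n 1/p_i$ rather than $n/p$, and that the implied constants do not depend on $t$ or $r$. A secondary technical point is justifying the interchange of the $x$-integration (in the $L^{\vec q}$ iterated norm) with the decomposition, but since the global bound is uniform in $x \in B(0,r)$ this is immediate. The condition $\vec p < n/\alpha$ guarantees $\sum 1/p_i > \alpha$, hence the relevant exponents are positive and the substitution $|y|^{-(n-\alpha)} \approx \int_{|y|}^\infty t^{\alpha-n-1}\,dt$ is valid with a finite, positive constant; I would record this at the outset.
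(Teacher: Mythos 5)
Your proposal is correct and follows essentially the same route as the paper's proof: the same decomposition at scale $2r$, the $(L^{\vec p},L^{\vec q})$-boundedness of $T_\alpha$ for the local piece, and the estimate $|x-y|\gtrsim |y|$ followed by writing $|y|^{\alpha-n}$ as an integral, Fubini, and mixed-norm H\"older for the tail. The only cosmetic difference is that you reinsert the $t$-integral for the local term over $[2r,4r]$, while the paper integrates over all of $[2r,\infty)$; both are valid.
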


\begin{proof}[Proof]
For any $r>0$, set $B=B(0,r)$ and $2B=B(0,2r)$, we write
$$f(x)=f(x){\chi}_{2B}(x)+f(x){\chi}_{(2B)^{c}}(x)=:f_1(x)+f_2(x)$$
and have
\begin{align*}
\|T_{\alpha}(f)\|_{L^{\vec{q}}(B)}&\leq\|T_{\alpha}(f_1)\|_{L^{\vec{q}}(B)}+\|T_{\alpha}(f_2)\|_{L^{\vec{q}}(B)}\\
&=:I_1+I_2.
\end{align*}
For $I_1$, since $f_1\in L^{\vec{p}}(\mathbb{R}^n)$ and $T_{\alpha}$ is bounded from $L^{\vec{p}}(\mathbb{R}^n)$ to $L^{\vec{q}}(\mathbb{R}^n)$, we get
\begin{align*}
I_1=\|T_{\alpha}(f_1)\|_{L^{\vec{q}}(B)}&\leq\|T_{\alpha}(f_1)\|_{L^{\vec{q}}(\mathbb{R}^n)}\lesssim\|f_1\|_{L^{\vec{p}}(\mathbb{R}^n)}
=\|f\chi_{2B}\|_{L^{\vec{p}}(\mathbb{R}^n)}\\
&\lesssim\|f\|_{L^{\vec{p}}(B(0,2r))}r^{\sum^{n}_{i=1}\frac{1}{q_{i}}}\int_{2r}^{\infty}t^{-\sum^{n}_{i=1}\frac{1}{q_{i}}-1}dt\\
&\leq r^{\sum^{n}_{i=1}\frac{1}{q_{i}}}\int_{2r}^{\infty}t^{-\sum^{n}_{i=1}\frac{1}{q_{i}}-1}\|f\|_{L^{\vec{p}}(B(0,t))}dt.
\end{align*}

We now estimate $I_2$. It is clear that $x\in B$, and $y\in{(2B)^{c}}$, then by the Fubini theorem and the fact that $|y-x|\geq|y|-|x|\geq\frac{1}{2}|y|$ lead to the following
\begin{align*}
|T_{\alpha}(f_2)(x)|&\leq\int_{(2B)^{c}}\frac{|f(y)|}{|x-y|^{n-\alpha}}dy\\
&\lesssim\int_{(2B)^{c}}\frac{|f(y)|}{|y|^{n-\alpha}}dy\\
&\lesssim\int_{(2B)^{c}}|f(y)|\int^{\infty}_{|y|}\frac{1}{t^{n-\alpha+1}}dtdy\\
&\leq\int^{\infty}_{2r}\int_{2r<|y|<t}|f(y)|\frac{1}{t^{n-\alpha+1}}dydt\\
&\lesssim\int^{\infty}_{2r}\|f\|_{L^{\vec{p}}(B(0,t))}t^{n-\sum^{n}_{i=1}\frac{1}{p_{i}}}\frac{1}{t^{n-\alpha+1}}dt.
\end{align*}
Thus, we have
$$I_2=\|T_{\alpha}(f_2)\|_{L^{\vec{q}}(B)}\lesssim r^{\sum^{n}_{i=1}\frac{1}{q_{i}}}\int_{2r}^{\infty}t^{-\sum^{n}_{i=1}\frac{1}{q_{i}}-1}\|f\|_{L^{\vec{p}}(B(0,t))}dt.$$
Combining the estimates $I_1$ and $I_2$, the proof of Lemma \ref{le4.1} is completed.
\end{proof}

\begin{lemma}\label{le4.2}
Let $0\leq\alpha<n,1<\vec{p}_{1},\vec{p_{2}},\vec{q}<\infty$ satisfy conditions $\vec{p_{1}}<\frac{n}{\alpha}, {\vec{p}_{1}}'<\vec{p}_{2}$ and $\alpha=\sum^{n}_{i=1}\frac{1}{p_{1i}}+\sum^{n}_{i=1}\frac{1}{p_{2i}}-\sum^{n}_{i=1}\frac{1}{q_{i}}$. Let $0<\lambda<\frac{1}{n}$ and
$b\in CBMO^{\vec{p}_{2},\lambda}(\mathbb{R}^{n})$, then the inequality
\begin{align*}
\|[b,T_{\alpha}](f)\|_{L^{\vec{q}}(B(0,r))}&\lesssim r^{\sum^{n}_{i=1}\frac{1}{q_{i}}}\|b\|_{{CBMO}^{\vec{p}_{2},\lambda}(\mathbb{R}^n)}\\
&\quad\times\int_{2r}^{\infty}t^{n\lambda}\big(1+\ln\frac{t}{r}\big)
\frac{\|f\|_{L^{\vec{p}_1}(B(0,t))}}{t^{\sum^{n}_{i=1}\frac{1}{p_{1i}}+1-\alpha}}dt
\end{align*}
holds for any ball $B(0,r)$ and for all $f\in L^{\vec{p}_1}_{loc}(\mathbb{R}^n)$.
\end{lemma}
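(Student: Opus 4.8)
\textbf{Proof proposal for Lemma \ref{le4.2}.}

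The plan is to mirror the proof of Lemma \ref{le4.1}, but now carry along the symbol $b$ and exploit Proposition \ref{pro2.1} to control the oscillation of $b$ across dyadic scales. First I would fix $r>0$, set $B=B(0,r)$, $2B=B(0,2r)$, and split $f=f_1+f_2$ with $f_1=f\chi_{2B}$ and $f_2=f\chi_{(2B)^c}$. Writing $[b,T_\alpha]f=(b-b_B)T_\alpha f-T_\alpha((b-b_B)f)$ and applying the Minkowski inequality to the four resulting pieces gives $\|[b,T_\alpha]f\|_{L^{\vec q}(B)}\le J_1+J_2+J_3+J_4$, exactly as in the proof of Theorem \ref{th3.2}. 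The near terms $J_1,J_2$ (involving $f_1$) are handled by choosing auxiliary exponents $\vec t$ with $\frac1{\vec q}=\frac1{\vec p_2}+\frac1{\vec t}$ and $\vec l$ with $\frac1{\vec l}=\frac1{\vec p_1}+\frac1{\vec p_2}$, using H\"older's inequality, the $(L^{\vec p_1},L^{\vec t})$- and $(L^{\vec l},L^{\vec q})$-boundedness of $T_\alpha$, and the definition of $CBMO^{\vec p_2,\lambda}$; these reduce to $\|f\chi_{2B}\|_{L^{\vec p_1}}\le \|f\|_{L^{\vec p_1}(B(0,2r))}$, which is then absorbed into the integral tail by noting $\int_{2r}^\infty t^{-\sum 1/q_i-1+\dots}dt$ converges and bounds $r^{-\sum 1/q_i}$ times the relevant power, just as in Lemma \ref{le4.1}.

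For the far terms $J_3,J_4$ (involving $f_2$), I would estimate $|T_\alpha(f\chi_{(2B)^c})(x)|$ and $|T_\alpha((b-b_B)f\chi_{(2B)^c})(x)|$ pointwise for $x\in B$ using the size bound $|x-y|^{-(n-\alpha)}$ together with $|y-x|\ge\frac12|y|$ and the layer-cake trick $\frac1{|y|^{n-\alpha}}\thickapprox\int_{|y|}^\infty t^{\alpha-n-1}dt$, converting the sum over dyadic annuli into an integral $\int_{2r}^\infty$. For $J_3$ this produces, after H\"older in the annulus and using $\|\chi_{B(0,t)}\|_{L^{\vec p_1'}}\thickapprox t^{n-\sum 1/p_{1i}}$, the factor $\int_{2r}^\infty \|f\|_{L^{\vec p_1}(B(0,t))}\, t^{\alpha-1-\sum 1/p_{1i}}\,dt$, and then H\"older $\frac1{\vec q}=\frac1{\vec p_2}+\frac1{\vec t}$ applied to $(b-b_B)\chi_B$ contributes $\|(b-b_B)\chi_B\|_{L^{\vec p_2}}\lesssim \|b\|_{CBMO^{\vec p_2,\lambda}}r^{n\lambda}\|\chi_B\|_{L^{\vec p_2}}$, yielding the claimed bound with the $t^{n\lambda}$ weight. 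For $J_4$ the new feature is that over the annulus $2^{k+1}B$ one must replace $b-b_B$ by $b-b_{2^{k+1}B}$; by Proposition \ref{pro2.1} (with $r_1=t\sim 2^{k+1}r$, $r_2=r$) the cost is a factor $1+|\ln(t/r)|=1+\ln(t/r)$, and $\|(b-b_{B(0,t)})\chi_{B(0,t)}\|_{L^{\vec p_2}}\lesssim \|b\|_{CBMO^{\vec p_2,\lambda}}t^{n\lambda}\|\chi_{B(0,t)}\|_{L^{\vec p_2}}$; splitting $\frac1{\vec p_1'}=\frac1{\vec p_2}+\frac1{\vec s}$ and multiplying out the $\|\chi_{B(0,t)}\|$ norms gives precisely the integrand $t^{n\lambda}(1+\ln\frac{t}{r})\,\|f\|_{L^{\vec p_1}(B(0,t))}\,t^{\alpha-1-\sum 1/p_{1i}}$.

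The main obstacle I anticipate is the bookkeeping in $J_4$: one must make sure that when passing from the discrete dyadic sum $\sum_k$ (as in Theorem \ref{th3.2}) to the continuous integral $\int_{2r}^\infty \frac{dt}{t}$, the logarithmic factor $1+\ln(t/r)$ and the weight $t^{n\lambda}$ combine correctly with the growth $\|\chi_{B(0,t)}\|_{L^{\vec p_2}}\|\chi_{B(0,t)}\|_{L^{\vec s}}\|\chi_{B(0,t)}\|_{L^{\vec p_1}}\thickapprox t^{3n-\sum 1/p_{2i}-\sum 1/s_i-\sum 1/p_{1i}}$ and, after using $\sum 1/s_i=\sum 1/p_{1i}'-\sum 1/p_{2i}=n-\sum 1/p_{1i}-\sum 1/p_{2i}$ and the relation $\alpha=\sum 1/p_{1i}+\sum 1/p_{2i}-\sum 1/q_i$, collapse to exactly $r^{\sum 1/q_i}$ times the stated integral with no leftover powers of $t$; this is the identity that forces the precise exponent $\sum 1/p_{1i}+1-\alpha$ in the denominator. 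Once the exponent arithmetic is verified for $J_3$ and $J_4$ and the $f_1$-terms are shown to be dominated by (a constant multiple of) the same integral restricted near $t=2r$, combining the four estimates finishes the proof.
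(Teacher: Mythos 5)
Your proposal follows essentially the same route as the paper's proof: the same $f=f_1+f_2$ splitting, the same four-term decomposition via $[b-b_B,T_\alpha]$, the same auxiliary exponents (the paper calls your $\vec t$ and $\vec s$ by $\vec l$ and $\vec m$), the same layer-cake/Fubini conversion of the far part into $\int_{2r}^\infty$, and the same use of Proposition \ref{pro2.1} to produce the $1+\ln(t/r)$ factor from the $|b_{B(0,t)}-b_{B(0,r)}|$ term. The exponent bookkeeping you flag as the main risk is exactly the computation the paper carries out, and it closes as you predict.
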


\begin{proof}[Proof]
For $b\in{CBMO}^{p_2,\lambda}(\mathbb{R}^n)$ and any $r>0$, set $B=B(0,r)$ and $2B=B(0,2r)$, we write
$$f(x)=f(x){\chi}_{2B}(x)+f(x){\chi}_{(2B)^{c}}(x)=:f_1(x)+f_2(x),$$
and
\begin{align*}
[b,T_{\alpha}](f)(x)&=[b-b_B,T_{\alpha}](f)(x)\\
&\leq(b(x)-b_B)T_{\alpha}(f_1)(x)+T_{\alpha}((b-b_B)f_1)(x)\\
&\quad+(b(x)-b_B)T_{\alpha}(f_2)(x)+T_{\alpha}((b-b_B)f_2)(x).
\end{align*}
Hence, we have
\begin{align*}
&\|[b,T_{\alpha}](f)\|_{L^{\vec{q}}(B(0,r))}\\
&\leq\|(b(x)-b_B)T_{\alpha}(f_1)\|_{L^{\vec{q}}(B(0,r))}+\|T_{\alpha}((b-b_B)f_1)\|_{L^{\vec{q}}(B(0,r))}\\
&\quad+\|(b(x)-b_B)T_{\alpha}(f_2)\|_{L^{\vec{q}}(B(0,r))}+\|T_{\alpha}((b-b_B)f_2)\|_{L^{\vec{q}}(B(0,r))}\\
&=:J_1+J_2+J_3+J_4.
\end{align*}
For $J_1$, choose $\vec{l}$ such that $\frac{1}{\vec{q}}=\frac{1}{\vec{p_{2}}}+\frac{1}{\vec{l}}$, then $\alpha=\sum^{n}_{i=1}\frac{1}{p_{1i}}-\sum^{n}_{i=1}\frac{1}{l_{i}}$.
%Since $1<\vec{p_{1}}<\frac{n}{\alpha}$ and $\vec{p_{1}}'<\vec{p_{2}}<\infty$, there is $1<\vec{l}<\frac{n}{\alpha}$.
Using H\"{o}lder's inequality and the $(L^{\vec{p}_{1}}(\mathbb{R}^{n}),L^{\vec{l}}(\mathbb{R}^{n}))$-boundedness of $T_{\alpha}$, we get
\begin{align*}
J_1&=\|(b(x)-b_B)T_{\alpha}(f_1)\|_{L^{\vec{q}}(B(0,r))}\\
&\lesssim\|(b-b_{B})\chi_{B}\|_{L^{\vec{p}_{2}}(\mathbb{R}^{n})}\|T_{\alpha}f_1\|_{L^{\vec{l}}(\mathbb{R}^{n})}\\
&\lesssim|B|^{\lambda}\|\chi_{B}\|_{L^{\vec{p}_{2}}(\mathbb{R}^{n})}\|b\|_{CBMO^{\vec{p}_{2},\lambda}(\mathbb{R}^{n})}
\|f_1\|_{L^{\vec{p}_{1}}(\mathbb{R}^{n})}\\
&\leq r^{n\lambda+\sum^{n}_{i=1}\frac{1}{p_{2i}}}\|b\|_{CBMO^{\vec{p}_{2},\lambda}(\mathbb{R}^{n})}\|f\|_{L^{\vec{p}_1}(2B)}\\
&\lesssim r^{n\lambda+\sum^{n}_{i=1}\frac{1}{p_{2i}}+\sum^{n}_{i=1}\frac{1}{l_{i}}}\|b\|_{CBMO^{\vec{p}_{2},\lambda}(\mathbb{R}^{n})}
\int_{2r}^{\infty}\|f\|_{L^{\vec{p}_{1}}(B(0,t))}\frac{1}{t^{\sum^{n}_{i=1}\frac{1}{l_{i}}+1}}dt\\
&\leq r^{\sum^{n}_{i=1}\frac{1}{q_{i}}}\|b\|_{CBMO^{\vec{p}_{2},\lambda}(\mathbb{R}^{n})}
\int_{2r}^{\infty}t^{n\lambda}\|f\|_{L^{\vec{p}_{1}}(B(0,t))}\frac{1}{t^{\sum^{n}_{i=1}\frac{1}{p_{1i}}+1-\alpha}}dt\\
&\lesssim r^{\sum^{n}_{i=1}\frac{1}{q_{i}}}\|b\|_{CBMO^{\vec{p}_{2},\lambda}(\mathbb{R}^{n})} \int_{2r}^{\infty}t^{n\lambda}\big(1+\ln\frac{t}{r}\big)\|f\|_{L^{\vec{p}_{1}}(B(0,t))}\frac{1}{t^{\sum^{n}_{i=1}\frac{1}{p_{1i}}+1-\alpha}}dt.
\end{align*}
Similarly, for $J_2$, take $\vec{s}$ such that $\frac{1}{\vec{s}}=\frac{1}{\vec{p_{1}}}+\frac{1}{\vec{p}_{2}}$, then $\alpha=\sum^{n}_{i=1}\frac{1}{s_{i}}-\sum^{n}_{i=1}\frac{1}{q_{i}}$. Nothing $\vec{p_{1}}<\frac{n}{\alpha}$, then using H\"{o}lder's inequality and the $(L^{\vec{s}}(\mathbb{R}^{n}),L^{\vec{q}}(\mathbb{R}^{n}))$-boundedness of $T_{\alpha}$, we also have

\begin{align*}
J_2&=\|T_{\alpha}((b-b_B)f_1)\|_{L^{\vec{q}}(B(0,r))}\\
&\lesssim\|(b-b_B)f\|_{L^{\vec{s}}(2B)}\\
&\lesssim\|(b-b_B)\chi_{2B}\|_{L^{\vec{p}_{2}}(\mathbb{R}^{n})}\|f\|_{L^{\vec{p}_{1}}(2B)}\\
&\lesssim|2B|^{\lambda}\|\chi_{2B}\|_{L^{\vec{p}_{2}}(\mathbb{R}^n)}\|b\|_{{CBMO}^{\vec{p}_{2},\lambda}(\mathbb{R}^n)}\|f\|_{L^{\vec{p}_{1}}(2B)}
\end{align*}
\begin{align*}
&\lesssim r^{n\lambda+\sum^{n}_{i=1}\frac{1}{p_{2i}}+\sum^{n}_{i=1}\frac{1}{p_{1i}}-\alpha}\|b\|_{CBMO^{\vec{p}_{2},\lambda}(\mathbb{R}^{n})}
\int_{2r}^{\infty}\|f\|_{L^{\vec{p}_{1}}(B(0,t))}\frac{1}{t^{\sum^{n}_{i=1}\frac{1}{p_{1i}}+1-\alpha}}dt\\
&\leq r^{\sum^{n}_{i=1}\frac{1}{q_{i}}}\|b\|_{CBMO^{\vec{p}_{2},\lambda}(\mathbb{R}^{n})}
\int_{2r}^{\infty}t^{n\lambda}\|f\|_{L^{\vec{p}_{1}}(B(0,t))}\frac{1}{t^{\sum^{n}_{i=1}\frac{1}{p_{1i}}+1-\alpha}}dt\\
&\lesssim r^{\sum^{n}_{i=1}\frac{1}{q_{i}}}\|b\|_{CBMO^{\vec{p}_{2},\lambda}(\mathbb{R}^{n})} \int_{2r}^{\infty}t^{n\lambda}\big(1+\ln\frac{t}{r}\big)\|f\|_{L^{\vec{p}_{1}}(B(0,t))}\frac{1}{t^{\sum^{n}_{i=1}\frac{1}{p_{1i}}+1-\alpha}}dt.
\end{align*}
For $J_3$, by Lemma \ref{le4.1}, we know that
$$|T_{\alpha}(f_2)(x)|\lesssim\int^{\infty}_{2r}\|f\|_{L^{\vec{p}_{1}}(B(0,t))}t^{n-\sum^{n}_{i=1}\frac{1}{p_{1i}}}\frac{1}{t^{n-\alpha+1}}dt,$$
which, for $\frac{1}{\vec{q}}=\frac{1}{\vec{p_{2}}}+\frac{1}{\vec{l}}$, together with H\"{o}lder's inequality and $J_1$, implies that
\begin{align*}
J_3&\lesssim\|b-b_{B}\|_{L^{\vec{q}}(B)}\int^{\infty}_{2r}\|f\|_{L^{\vec{p}_{1}}(B(0,t))}
t^{n-\sum^{n}_{i=1}\frac{1}{p_{1i}}}\frac{1}{t^{n-\alpha+1}}dt\\
&\lesssim\|b-b_{B}\chi_{B}\|_{L^{\vec{p}_{2}}(\mathbb{R}^{n})}\|\chi_{B}\|_{L^{\vec{l}}(B)}
\int^{\infty}_{2r}\|f\|_{L^{\vec{p}_{1}}(B(0,t))}\frac{1}{t^{\sum^{n}_{i=1}\frac{1}{p_{1i}}+1-\alpha}}dt\\
&\lesssim r^{\sum^{n}_{i=1}\frac{1}{q_{i}}}\|b\|_{CBMO^{\vec{p}_{2},\lambda}(\mathbb{R}^{n})} \int_{2r}^{\infty}t^{n\lambda}\big(1+\ln\frac{t}{r}\big)\|f\|_{L^{\vec{p}_{1}}(B(0,t))}\frac{1}{t^{\sum^{n}_{i=1}\frac{1}{p_{1i}}+1-\alpha}}dt.
\end{align*}

We now estimate $J_4$. It is clear that $x\in B$, and $y\in{(2B)^{c}}$, then by the fact that $|y-x|\geq|y|-|x|\geq\frac{1}{2}|y|$ leads to the following result
\begin{align*}
|T_{\alpha}((b-b_B)f\chi_{(2B)^{c}})(x)|&\leq\int_{(2B)^{c}}\frac{|b(y)-b_B||f(y)|}{|x-y|^{n-\alpha}}dy\\
&\lesssim\int_{(2B)^{c}}\frac{|b(y)-b_B||f(y)|}{|y|^{n-\alpha}}dy,
\end{align*}
by taking $\|\cdot\|_{L^{\vec{q}}(B(0,r))}$ and the Fubini theory, we get
\begin{align*}
J_4&\lesssim\int_{(2B)^{c}}\frac{|b(y)-b_B||f(y)|}{|y|^{n-\alpha}}dy\|\chi_{B}\|_{L^{\vec{q}}(\mathbb{R}^{n})}\\
&\leq r^{\sum^{n}_{i=1}\frac{1}{q_{i}}}\int_{(2B)^{c}}\frac{|b(y)-b_B||f(y)|}{|y|^{n-\alpha}}dy\\
&\lesssim r^{\sum^{n}_{i=1}\frac{1}{q_{i}}}\int_{(2B)^{c}}|b(y)-b_B||f(y)|\int^{\infty}_{|y|}\frac{1}{t^{n-\alpha+1}}dtdy\\
&\leq r^{\sum^{n}_{i=1}\frac{1}{q_{i}}}\int^{\infty}_{2r}\int_{B(0,t)}|b(y)-b_B||f(y)|dy\frac{1}{t^{n-\alpha+1}}dt.
\end{align*}
The inner integral can be estimated as follows,

\begin{align*}
\int_{B(0,t)}|b(y)-b_B||f(y)|dy&\leq\int_{B(0,t)}|b(y)-b_B(0,t)||f(y)|dy\\
&\quad+\int_{B(0,t)}|b_B(0,t)-b_B(0,r)||f(y)|dy\\
&=:J_{41}+J_{42}.
\end{align*}

For $J_{41}$, take $\vec{m}$ such that $\frac{1}{\vec{p_{1}}'}=\frac{1}{\vec{p_{2}}}+\frac{1}{\vec{m}}$, by the H\"{o}lder inequality, we have
\begin{align*}
&J_{41}\lesssim\|(b-b_{B(0,t)})\chi_{B(0,t)}\|_{L^{\vec{p}_{1}'}(\mathbb{R}^{n})}\|f\|_{L^{\vec{p}_{1}}(B(0,t))}\\
&\lesssim\|(b-b_{B(0,t)})\chi_{B(0,t)}\|_{L^{\vec{p}_{2}}(\mathbb{R}^{n})}\|\chi_{B(0,t)}\|_{L^{\vec{m}}(\mathbb{R}^{n})}
\|f\|_{L^{\vec{p}_{1}}(B(0,t))}\\
&\lesssim t^{n\lambda+\sum^{n}_{i=1}\frac{1}{p'_{1i}}}
\|b\|_{{CBMO}^{\vec{p}_{2},\lambda}(\mathbb{R}^n)}\|f\|_{L^{\vec{p}_{1}}(B(0,t))}.
\end{align*}

For $J_{42}$, by the H\"{o}lder inequality and Proposition \ref{pro2.1}, we obtain
\begin{align*}
J_{42}&=|b_B(0,t)-b_B(0,r)|\int_{B(0,t)}|f(y)|dy\\
&\lesssim|b_B(0,t)-b_B(0,r)|\|f\|_{L^{\vec{p}_{1}}(B(0,t))}\|\chi_{B(0,t)}\|_{L^{\vec{p}_{1}'}(B(0,t))}\\
&\lesssim t^{\sum^{n}_{i=1}\frac{1}{p'_{1i}}}\|f\|_{L^{\vec{p}_{1}}(B(0,t))}\frac{1}{|B(0,t)|}\|(b-b_{B(0,r)}\chi_{B(0,t)}\|_{L^{1}(\mathbb{R}^{n})}\\
&\lesssim t^{\sum^{n}_{i=1}\frac{1}{p'_{1i}}}\|f\|_{L^{\vec{p}_{1}}(B(0,t))}
\frac{1}{|B(0,t)|}\|(b-b_{B(0,r)}\chi_{B(0,t)}\|_{L^{\vec{p}_{2}}(\mathbb{R}^{n})}\|\chi_{B(0,t)}\|_{L^{\vec{p}_{2}'}(\mathbb{R}^{n})}\\
&\lesssim t^{n\lambda}(1+\ln\frac{t}{r})t^{\sum^{n}_{i=1}\frac{1}{p'_{1i}}}\|b\|_{{CBMO}^{\vec{p}_{2},\lambda}(\mathbb{R}^n)}\|f\|_{L^{\vec{p}_{1}}(B(0,t))}.
\end{align*}
Combining $J_{41}$ and $J_{42}$ gives that
\begin{align*}
J_4&\lesssim r^{\sum^{n}_{i=1}\frac{1}{q_{i}}}\|b\|_{{CBMO}^{\vec{p}_{2},\lambda}(\mathbb{R}^n)}\int^{\infty}_{2r}t^{n\lambda}(1+\ln\frac{t}{r})
t^{\sum^{n}_{i=1}\frac{1}{p'_{1i}}}\|f\|_{L^{\vec{p}_{1}}(B(0,t))}\frac{1}{t^{n-\alpha+1}}dt\\
&=r^{\sum^{n}_{i=1}\frac{1}{q_{i}}}\|b\|_{{CBMO}^{\vec{p}_{2},\lambda}(\mathbb{R}^n)}\int^{\infty}_{2r}t^{n\lambda}(1+\ln\frac{t}{r})
\|f\|_{L^{\vec{p}_{1}}(B(0,t))}\frac{1}{t^{\sum^{n}_{i=1}\frac{1}{p_{1i}}+1-\alpha}}dt.
\end{align*}
This completes the proof of Lemma \ref{le4.2}.
\end{proof}

Now we give the proofs of Theorems \ref{th4.1}-\ref{th4.2} in this position. In fact, the methods of the proofs are standard, so we only provide Theorem \ref{th4.2} with the following.

\begin{proof}[Proof of Theorem \ref{th4.2}]
For convenience of calculations,  put  $\sum^{n}_{i=1}\frac{1}{p_{1i}}=:N$, by Lemma \ref{le4.2} and a change of variables $t=s^{-\frac{1}{N}}$, we obtain that

\begin{align*}
&\|[b,T_{\alpha}](f)\|_{\mathcal{B}^{\vec{q},\varphi_2}(\mathbb{R}^n)}\\
&\lesssim\sup\limits_{r>0}\frac{1}{\varphi_2(r)}\frac{r^{\sum^{n}_{i=1}\frac{1}{q_{i}}}}{\|\chi_{B}\|_{L^{\vec{q}}(\mathbb{R}^{n})}}
\int_{2r}^{\infty}t^{n\lambda}(1+\ln\frac{t}{r})\|f\|_{L^{\vec{p}_{1}}(B(0,t))}\frac{1}{t^{N+1-\alpha}}dt\\
&\lesssim\sup\limits_{r>0}\frac{1}{\varphi_2(r)}\int_{0}^{r^{-N}}s^{\frac{-n\lambda-\alpha}{N}}(1+\ln\frac{s^{-\frac{1}{N}}}{r})
\|f\|_{L^{\vec{p}_{1}}(B(0,s^{-\frac{1}{N}}))}ds\\
&=\sup\limits_{r>0}\frac{1}{\varphi_2(r^{-\frac{1}{N}})}\int_{0}^{r}s^{\frac{-n\lambda-\alpha}{N}}(1+\frac{1}{N}\ln\frac{r}{s})
\|f\|_{L^{\vec{p}_{1}}(B(0,s^{-\frac{1}{N}}))}ds.
\end{align*}
If we set
$$\omega(t)={\varphi_2(t^{-\frac{1}{N}})^{-1}t}, \quad \nu(t)={\varphi_1(t^{-\frac{1}{N}})^{-1}t^{1-\frac{-n\lambda-\alpha}{N}}},$$
since the pair $(\varphi_1,\varphi_2)$ satisfy the following condition
$$\int_{r}^{\infty}t^{n\lambda}(1+\ln\frac{t}{r})
\frac{ess\inf_{t<\tau<\infty}\varphi_1(\tau){\tau}^{N+\alpha}}{{t^{N+1}}}dt\lesssim\varphi_2(r).$$
It follows that
$$\sup\limits_{t>0}\frac{\omega(t)}{t}\int_{0}^{t}\frac{dr}{ess\sup_{0<s<r}\nu(s)}<\infty.$$
This leads to the following inequality (see \cite{ML})
$$ess\sup_{t>0}\omega(t)\mathcal{H}g(t)\lesssim ess\sup_{t>0}\nu(t)g(t)$$
holds for all nonnegative and non-increasing functions $g$ on $(0,\infty)$, where $\mathcal{H}$ is the classical Hardy operator, that is,
$$\mathcal{H}g(t)=\frac{1}{t}\int_{0}^{t}g(r)dr.$$
Therefore, let $g(t)=t^{\frac{-n\lambda-\alpha}{N}}(1+\frac{1}{N}\ln\frac{r}{t})\|f\|_{L^{\vec{p}_{1}}(B(0,t^{-\frac{1}{N}}))}$, we have
\begin{align*}
\|[b,T_{\alpha}](f)\|_{\mathcal{B}^{\vec{q},\varphi_2}(\mathbb{R}^n)}
&\lesssim\sup\limits_{r>0}\frac{r}{\varphi_1(r^{-\frac{1}{N}})}\|f\|_{L^{\vec{p}_{1}}(B(0,r^{-\frac{1}{N}}))}\\
&=\sup\limits_{r>0}\frac{1}{\varphi_1(r)}r^{-\sum^{n}_{i=1}\frac{1}{p_{1i}}}\|f\|_{L^{\vec{p}_{1}}(B(0,r))}\\
&=\|f\|_{\mathcal{B}^{q,\varphi_1}(\mathbb{R}^n)}.
\end{align*}
The proof of Theorem \ref{th4.2} is completed.
\end{proof}

%\textbf{Declarations.}
%The authors declare that there is no conflict of interests regarding the publication of this paper.

\textbf{Acknowledgement.}
The authors would like to express their gratitude to the referee for his/her very valuable comments.
%%%%%%%%%%%%%%%%%%%%%%%%%%%%%%%%%%%%%%%%%%%%%%%%%%%%%%%%%%%%%%%%%%
%%%%%%%%%%%%%%%%%%%%%%%%%%%%%%%%%%%%%%%%%%%%%%%%%%%%%%%%%%%%%%%%%%

\end{document}